\documentclass{amsart}

\usepackage [T1]{fontenc}
\usepackage[cmtip,all]{xy}
\usepackage{mathrsfs}
\usepackage{mathtools}

\newtheorem{theorem}{Theorem}[section]

\theoremstyle{definition}

\theoremstyle{remark}
\newtheorem{remark}[theorem]{Remark}

\theoremstyle{proposition}
\newtheorem{proposition}[theorem]{Proposition}

\numberwithin{equation}{section}

\newcommand{\F}{\mathbb{F}}
\newcommand{\Q}{\mathbb{Q}}
\newcommand{\Z}{\mathbb{Z}}
\newcommand{\C}{\mathbb{C}}

\begin{document}

\title[isogenies and discrete logarithm problem]{Translating the discrete logarithm problem on Jacobians of genus 3 hyperelliptic curves with $(\ell,\ell,\ell)$-isogenies}

\author{Song Tian}
\address{State Key Laboratory of Information Security, Institute of Information Engineering, Chinese Academy of Sciences, Beijing, 100093, China}
\address{State Key Laboratory of Cryptology, P.O. Box 5159, Beijing, 100878, China}
\email{tiansong@iie.ac.cn}
\thanks{Song Tian is supported by the China Scholarship Council and by the National Natural Science Foundation of China under Grant No. 61802401.}

\begin{abstract}
We give an algorithm to compute $(\ell,\ell,\ell)$-isogenies from the Jacobians of genus three hyperelliptic curves to the Jacobians of non-hyperelliptic curves.
An important application is to reduce the discrete logarithm problem in the Jacobian of a hyperelliptic curve to the corresponding problem in the Jacobian of a non-hyperelliptic curve.
\end{abstract}

\maketitle

\section{Introduction}
Let $\mathcal{C}$ be a nonsingular curve of genus $g$ over a finite field $\F_q$ and let $V$ be an $\F_q$-rational maximal isotropic subgroup of the $\ell$-torsion of its Jacobian $J_\mathcal{C}$ for an odd prime $\ell\ne \text{char}(\F_q)$. Then the quotient of $J_\mathcal{C}$ by $V$, denoted by $J_\mathcal{C}/V$, is a principally polarized abelian variety over $\F_q$. It is generically isomorphic (over $\F_{q^2}$) to the Jacobian of another curve $\mathcal{D}$ when $g$ is $2$ or $3$ \cite{Ritzenthaler_jacobian,Oort}. In this work we consider the case where $g$ equals $3$, $\mathcal{C}$ is hyperelliptic, while $\mathcal{D}$ is non-hyperelliptic. Our aim is to translate the discrete logarithm problem (DLP) from $J_\mathcal{C}$ to $J_\mathcal{D}$ via the quotient map $\psi:J_\mathcal{C}\to J_\mathcal{D}$ which is called an $(\ell,\ell,\ell)$-isogeny.
This is motivated by the fact that, as Smith \cite{smith222} first realized, explicit isogenies from $J_\mathcal{C}$ to $J_\mathcal{D}$ can make the discrete logarithm weak, since
the DLP in $J_\mathcal{C}$ can be solved in an expected time $\tilde{O}(q^{4/3})$ by using a double large prime variant of index calculus method \cite{double_lp}, while the DLP in $J_\mathcal{D}$ can be solved in time $\tilde{O}(q)$ \cite{diem_nonhyperelliptic}.

Isogeny computation for the genus one case has been solved by V\'elu \cite{velu}. He finds bases of vector spaces of rational functions which are invariant under translations by elements of given finite subgroups. So one can
explicitly write down equations for the resulting elliptic curve and isogeny.
The genus two case has been studied by Dolgachev and Lehavi \cite{Dolgachev}, and Smith \cite{Smith_g2}, who give an explicit geometrical interpretation and an efficient algorithm for $\ell=3$. For $g\ge2$, Lubicz and Robert \cite{CIBAV} provide a general method for computing $(\ell,\ldots,\ell)$-isogenies between principally polarized abelian varieties of dimension $g$ described in terms of algebraic theta coordinates. Their method has complexity $\tilde{O}(\ell^{rg/2})$, where $r=2$ if $\ell$ is a sum of two squares and $r=4$ otherwise.
Using this method, together with formulae for the conversion between Mumford representation and theta coordinates, Cosset and Robert \cite{Cosset-Robert} treat the case $g=2$.

Another interesting treatment is introduced by Couveignes and Ezome \cite{Couveignes-Ezome}.
They first describe how to construct and evaluate functions on the quotient $J_\mathcal{C}/V$ in general. Then they focus on the case $g=2$,
and compute equations for the Kummer variety of $J_\mathcal{C}/V$ by evaluating functions at enough points. Using the geometry of Kummer surfaces, they find an equation for $\mathcal{D}$.
Finally, they compute an explicit map from $\mathcal{C}$ to $J_\mathcal{D}$ by solving some system of differential equations. The total complexity of their algorithm is $\tilde{O}(\ell^2)$.

Some improvements to Couveignes and Ezome's method has been made by Milio \cite{EneaMilio}. He notices that in order to compute an equation for $\mathcal{D}$, it is not necessary to compute the equation of the quartic defining the Kummer surface. One just needs the images of two-torsion points, so the number of evaluations can be reduced. However, he does not compute the images of two-torsion points directly because the algorithm to evaluate functions does not behave well. This difficulty is handled with the $(16,6)$-configuration of the  Kummer surface.
Continuing the same ideas, it is possible to compute an equation for $\mathcal{D}$ of genus $g\ge3$ when $\mathcal{D}$ is hyperelliptic. In particular, Milio computes the equation for the genus $3$ case by using the $(64,29)$-configuration of the Kummer threefold. For the genus $3$ case where $\mathcal{D}$ is non-hyperelliptic, he applies the classical results of Weber \cite{Weber} and Riemman \cite{Riemann}, but
no explicit map from $\mathcal{C}$ to $J_\mathcal{D}$ is given.

For $g=3$ and $\ell=2$, Smith \cite{smith222} finds a rapidly computable isogeny from the Jacobian of hyperelliptic curve $\mathcal{C}$ to the Jacobian of non-hyperelliptic curve $\mathcal{D}$.
His construction is based on the so-called "trigonal construction" of Recillas \cite{Recillas}, Donagi and Livn\'e \cite{DL}, and it was later shown by Frey and Kani \cite{Frey-Kani} that Smith's result is a consequence of the study of moduli spaces for covers of the projective line with given ramification type and monodromy group.

In this paper, we will continue in the direction of \cite{Couveignes-Ezome,EneaMilio}: that is, we will address the problem of computing $(\ell,\ell,\ell)$-isogenies between
Jacobian varieties of dimension $3$. For our approach, an algebraic version of ratios of the $N$th order theta functions is of importance.
This material is explained by Shepherd-Barron in \cite{Barron}, and is introduced in Section 2.

In Section 3, we recall how to describe Kummer varieties
with Schr\"odinger coorditates. The usefulness of the Schr\"odinger coordinates is that it allows us to compute the action of the subgroup of two-torsion points
just by permuting the coordinates up to sign. In particular, one can easily obtain the coordinates of the images in the Kummer variety of all two-torsion points from those of the identity element.

In Section 4, we use formulae from the theory of classical theta functions to compute an equation for curve $\mathcal{D}$ (as Milio does in \cite{EneaMilio}).
Then we compute the linear change of coordinates to identify the Kummer varies of $J_\mathcal{C}/V$ and $J_\mathcal{D}$. It remains to be able to lift a point in the Kummer variety of $J_\mathcal{D}$ to the Jacobian $J_\mathcal{D}$. For this we compute eight "good" functions on the Kummer variety of $J_\mathcal{D}$ which allow us to construct a system of polynomial equations of low degree.
This system is easy to solve in practice, and a solution provides the required lifting. Though a point is lifted to two opposite points,
we emphasize that for application to cryptography this is sufficient. In fact, we can go further and compute an explicit expression for the $(\ell,\ell,\ell)$-isogeny by applying the same idea of \cite{Couveignes-Ezome}.

In Section 5, we explain briefly how to evaluate the functions we considered. In Section 6, we give an explicit example of computation.

\section{Theta functions}
In this section we recall the expressions for algebraic theta functions in terms of determinants of rational functions on the curve, upon which we will base future calculations.

\subsection{Weil functions}\label{weilfunc}
First we fix some notation. Let $\mathcal{C}$ be a projective, nonsingular curve of genus $g$ over an algebraically closed field $k$. If $D$ is a divisor on $\mathcal{C}$,
then we denote by $[D]$ the linear equivalence class of $D$. For every integer $n$ we denote by $\text{Pic}^n(\mathcal{C})$ the degree $n$ part of the divisor class group $\text{Pic}(\mathcal{C})$ of $\mathcal{C}$. The theta divisor on $\text{Pic}^{g-1}(\mathcal{C})$ is denoted by $W$. It is the set of all classes of effective divisors of degree $g-1$.
The canonical class on $\mathcal{C}$ is denoted by $K_\mathcal{C}$. If $Z$ is a divisor on $\text{Pic}(\mathcal{C})$ and $u$ is a point on $\text{Pic}(\mathcal{C})$, then we denote by $Z_u$ the translation of $Z$ by $u$.


Let $D$ be a divisor of degree $2g-1$ on $\mathcal{C}$ and let $(f_i^D)_{1\leq i\leq g}$ be a basis of the vector space $H^0(\mathcal{C},\mathcal{O}_\mathcal{C}(D))$. Then we have a nonzero section
$\det(f_i^D(z_j))$ in $H^0(\mathcal{C}^g,\mathcal{O}_{\mathcal{C}^g}(\sum_{i=1}^g pr_i^*D))$,
where the $pr_i$ are the projections of $\mathcal{C}^g$ onto the $g$ factors. We will use these determinants to form rational functions on the symmetric product $\mathcal{C}^{(g)}$, which can be thought of as functions on the Jacobian $J_\mathcal{C}=\text{Pic}^0(\mathcal{C})$.

Let $j_{g}:\mathcal{C}^{g}\to \text{Pic}^{g}(\mathcal{C})$
be the map which takes $(z_1,\ldots,z_{g})$ to the class of divisor $(z_1)+\cdots+(z_{g})$.
Let $\Delta_{ij}$ be the $(i,j)$ diagonal in $\mathcal{C}^g$. Then the rational function $\det(f_i^D(z_j))$ on $\mathcal{C}^g$
has divisor
\begin{equation}\label{zerolocus}
\text{div}(\det(f_i^D(z_j)))=j_{g}^*(W_{[D]-K_\mathcal{C}})+\sum_{1\leq i<j\leq g}\Delta_{ij}-\sum_{1\leq i\leq g}pr_i^*(D).
\end{equation}
This equation is used in \cite{Couveignes-Ezome,EneaMilio} and is proved by Shepherd-Barron in \cite[Proposition 4.1]{Barron}.
Roughly speaking, one can first check that set-theoretically, the zero locus of
$\det(f_i^D(z_j))$ equals $j_{g}^{-1}(W_{[D]-K_\mathcal{C}})\cup\cup_{1\leq i<j\leq g}\Delta_{ij}$, so
\[\text{div}(\det(f_i^D(z_j)))+\sum_{1\leq i\leq g}pr_i^*(D)=aj_{g}^*(W_{[D]-K_\mathcal{C}})+b\sum_{1\leq i<j\leq g}\Delta_{ij}
\]
for some $a,b>0$. Then one can consider the intersections of this effective divisor with curves $\{(z,\ldots,z)\in \mathcal{C}^g:z\in \mathcal{C}\}$ and
$\{(z_1,\ldots,z_{g-1},z):z\in \mathcal{C}\}$ (the $z_1,\ldots, z_{g-1}$ are fixed and distinct) respectively to determine that the coefficients $a,b$ are both equal to $1$.

Let $N\ge2$ be an integer that is prime to the characteristic of $k$ and $J_\mathcal{C}[N]$ the $N$-torsion subgroup of the Jacobian $J_\mathcal{C}$.
Fix a closed point $O\in \mathcal{C}$ and a divisor $\theta$ such that $2[\theta]=K_\mathcal{C}$.
For each $P\in J_\mathcal{C}[N]$, choose an effective divisor $E_P$ such that $[E_P-(O)-\theta]$ represents $P$. Then for each $P$, there is a rational function $h^{E_P}$ on $\mathcal{C}$ whose divisor is $N(E_P-(O)-\theta)$.
 Assume that $E_P=(O)+\theta$ for $P=0$.
Set $D_P=E_P+\theta$ and choose a basis $(f_i^{D_P})_{1\leq i\leq g}$ of $H^0(\mathcal{C}, \mathcal{O}_{\mathcal{C}}(D_P))$.
Then by \eqref{zerolocus}, the function
\begin{equation}\label{WeilFunction}
  g_P=(\frac{\det(f_i^{D_P}(z_j))}{\det(f_i^{D_0}(z_j))})^N\prod_{i=1}^{g}h^{E_P}(z_i)
\end{equation}
has divisor
\[\text{div}(g_P)=j_{g}^*(N W_{[E_P-\theta]}-N W_{[(O)]}).\]
Since permutations of factors of $\mathcal{C}^{g}$ leave $g_P$ invariant, $g_P$ is the pullback of a rational function on
$\text{Pic}^{g}(\mathcal{C})$. We set $\mathcal{W}=W_{-[\theta]}$. Then via the map $j_g$ followed by the translation $\text{Pic}^g(\mathcal{C})\to\text{Pic}^0(\mathcal{C})$ by $[-(O)-\theta]$, we see that $g_P$ is the pullback of a rational function $\eta_{P}$ on $\text{Pic}^{0}(\mathcal{C})$ with divisor $\text{div}(\eta_{P})=N \mathcal{W}_P-N \mathcal{W}$.
We say that $\eta_{-P}$ is a Weil function for $P$ (at level $N$).


\subsection{Algebraic theta functions}\label{sec:normalization}

In subsection \ref{weilfunc} we constructed for every $N$-torsion point an explicit Weil function (up to a non-zero scalar multiple).
Here we want to choose normalized Weil functions which will be defined below.


We recall Mumford's theory of theta group \cite{EqnAVI,AV}. Let $\mathcal{L}=\mathcal{O}_{J_\mathcal{C}}(N \mathcal{W})$ be the sheaf defined by the divisor $N \mathcal{W}$ on $J_\mathcal{C}$.
The theta group $\mathscr{G}(\mathcal{L})$ of $(J_\mathcal{C}, \mathcal{L})$ can be regarded as the set
\[\mathscr{G}(\mathcal{L})=\{(P,f_P): \text{div}(f_P)=NT_P^*\mathcal{W}-N\mathcal{W}, P\in J_\mathcal{C}[N] \}\]
with group law
\[(P,f_P)(Q,f_Q)=(P+Q, f_PT_P^*f_Q),\]
where $T_P:J_\mathcal{C}\to J_\mathcal{C}$ is the translation by $P$.
The kernel of the natural projection $\mathscr{G}(\mathcal{L})\to J_\mathcal{C}[N]$ can be  identified with $k^*$. So $\mathscr{G}(\mathcal{L})$ is a central extension
\[1\to k^*\to \mathscr{G}(\mathcal{L})\to J_\mathcal{C}[N]\to 0. \]
Note that if we choose a Weil function $f_P$ for each $P\in J_\mathcal{C}[N]$, then the associativity of the group law implies that
\[d:J_\mathcal{C}[N]\times J_\mathcal{C}[N]\to k^*, (P,Q)\mapsto\frac{f_PT_P^*f_Q}{f_{P+Q}},\]
is a $2$-cocycle (for trivial group action of $J_\mathcal{C}[N]$ on $k^*$). The Weil pairing $e_N$ can be expressed as
\begin{equation}\label{WeilPairing}
  e_N(P,Q)=\frac{d(P,Q)}{d(Q,P)}.
\end{equation}




We are just interested in a choice such that the induced $2$-cocycle $d$ is equal to a given bilinear pairing $d_N$ on $J_\mathcal{C}[N]$.
As in \cite{Barron} we call an ordered set $\{f_P\}_{P\in J_\mathcal{C}[N]}$ of Weil functions a \emph{Weil set}, and say it is \emph{normal} if $f_PT_P^*f_Q=d_N(P,Q)f_{P+Q}$ for all $P,Q\in J_\mathcal{C}[N]$.
An ordered set of non-zero scalars $\{\alpha_P\}_{ P\in J_\mathcal{C}[N]}$ is then called a \emph{normalization} of a given Weil set $\{f_P\}_{P\in J_\mathcal{C}[N]}$ if $\{\tilde{f}_P=\alpha_Pf_P\}_{P\in J_\mathcal{C}[N]}$ is normal.
Two kinds of $d_N$ which will be used later are the following \cite{Barron}. If $P_1,\ldots,P_{2g}$ is a symplectic basis of $J_\mathcal{C}[N]$, then
$d_N(P_i,P_{i+g})=e_N(P_i,P_{i+g})$ for $i=1,\ldots,g$ and $d_N=1$ on all other pairs of basis elements. In case $N$ is odd, we will often take $d_N(P,Q)=e_N(P,Q)^{(N+1)/2}$.

Now we recall from \cite[Section 3.2]{Barron} the method of computing a normalization of a Weil set.
Define $\gamma(P,Q)=d_N(P,Q)\frac{f_{P+Q}}{f_PT_P^*f_Q}$ for all $P,Q\in J_\mathcal{C}[N]$. Then we have to solve the equations  \[\frac{\alpha_P\alpha_Q}{\alpha_{P+Q}}=\gamma(P,Q).\]
Since $\tilde{f}_P=1$, we can assume $\alpha_0=f_0=1$, so
\begin{equation}\label{Npower}
  \alpha_P^N=\gamma(P,P)\gamma(P,2P)\cdots\gamma(P,(N-1)P).
\end{equation}
Let $P_1,\ldots,P_{2g}$ be a symplectic basis of $J_\mathcal{C}[N]$. For each $i=1,\ldots,2g$,
take a root $\alpha_{P_i}$ of equation \eqref{Npower} arbitrarily. Then for any integer $j$,
\[\alpha_{(j+1)P_i}=\frac{\alpha_{jP_i}\alpha_{P_i}}{\gamma(jP_i,P_i)},\]
so we obtain $\alpha_P$ for all $P\in \langle P_i\rangle$.
Next, suppose we know $\alpha_P$ for all $P\in \langle P_1,\ldots,P_i\rangle$, then
\[\alpha_{P+jP_{i+1}}=\frac{\alpha_{jP_{i+1}}\alpha_P}{\gamma(jP_{i+1},\alpha_P)},\]
so by induction on $i$ we obtain all $\alpha_P$.

Note that $\mathcal{W}$ is symmetric. If we require that $\tilde{f}_P$ be symmetric, namely,
$[-1]^*\tilde{f}_P=\tilde{f}_{-P}$, then
\[\alpha_P^2=\gamma(P,-P)\frac{f_{-P}}{[-1]^*f_P}.\]
So if $N$ is odd, then $\alpha_P=\alpha_P^N /(\alpha_P^2)^{\frac{N-1}{2}}$ is determined uniquely, and if $N$ is even,
there is an ambiguity of $\pm1$.

\begin{remark}
By definition, the map $\gamma:J_\mathcal{C}[N]\times J_\mathcal{C}[N]\to k^*$ is a $2$-coboundary, which means that there exists a map $\alpha:J_\mathcal{C}[N]\to k^*$ such that
$\gamma(P,Q)=\alpha(P)\alpha(Q)\\ \alpha(P+Q)^{-1}$. The fact that $\gamma$ is $2$-cocycle also implies that the values of $\alpha(P+Q+R)$ computed respectively from $\alpha(P),\alpha(Q+R)$ and $\alpha(P+Q),\alpha(R)$ coincide. So $\alpha$, as we have seen, is determined by its values on a basis of $J_\mathcal{C}[N]$.
\end{remark}

\begin{remark}
 If $\{\tilde{f}_P\}_{P\in J_\mathcal{C}[N]}$ is a normal Weil set, then for each character $\chi\in Hom(J_\mathcal{C}[N],k^*)$, $\{\chi(P)\tilde{f}_P\}_{P\in J_\mathcal{C}[N]}$ is also a normal Weil set. In fact,
 these are all possible normal Weil sets (with respect to $d_N$)(see \cite[Lemma 3.2]{Barron}).
 If furthermore each $\tilde{f}_P$ is symmetric, then it is not necessary that all $\chi(P)\tilde{f}_P$ are symmetric. For odd $N$, it is the case only for trivial $\chi$, and for $N=2$ for all characters.
\end{remark}


\subsection{Classical theta functions }
In this subsection we recall normalized Weil functions' link with analytic theta functions with characteristics when the base field is $\C$.


Now suppose that $k=\C$. Once a symplectic basis $\gamma_1,\ldots,\gamma_g,\gamma_1',\ldots,\gamma_g'$ of $H_1(\mathcal{C},\Z)$ is chosen, we can find a normalized basis $\omega_1,\ldots,\omega_g$ of $H^0(\mathcal{C},\Omega^1)$ such that the $g\times 2g$ period matrix $(\int_{\gamma_j}\omega_i, \int_{\gamma_j'}\omega_i)$ is equal to $(I_g,\Omega)$ with $\Omega\in \mathfrak{H}_g$, the
Siegel upper-half-space of degree $g$. The column vectors of this matrix generate a lattice $\Lambda_\Omega=\Z^g+\Omega\Z^g$ in $\C^g$, and the complex torus $\C^g/\Lambda_\Omega$ is called the \emph{analytic Jacobian variety} of $\mathcal{C}$.
If $P_0$ is a base point on $\mathcal{C}$, then we have the Abel-Jacobi map
\[A:\mathcal{C}\to \C^g/\Lambda_\Omega, P\mapsto (\int_{P_0}^P\omega_1,\ldots, \int_{P_0}^P\omega_g) \mod \Lambda_\Omega.\]
This map extends $\Z$-linearly to arbitrary divisors, and then induces an isomorphism (of abelian groups) $A: \text{Pic}^0(\mathcal{C})\to \C^g/\Lambda_\Omega$.
The Weil pairing on $J_\mathcal{C}[N]$ is then identified with
\[\langle ,\rangle: \frac{1}{N}\Lambda_\Omega/\Lambda_\Omega\times \frac{1}{N}\Lambda_\Omega/\Lambda_\Omega\to \mu_N \]
given by $\langle \Omega x_1+y_1+\Lambda_\Omega, \Omega x_2+y_2+\Lambda_\Omega\rangle= \exp(-2\pi i N({}^t\!x_1y_2-{}^t\!x_2y_1))$.

Recall that a theta characteristic of $\mathcal{C}$ is a divisor class $[D]$ such that $2[D]$ is the canonical divisor class.
To any theta characteristic $[D]$ one can attach the quadratic form $q_{[D]}$ on $J_\mathcal{C}[2]$ by setting
 \[q_{[D]}([D'])=\text{dim}\,H^0(\mathcal{C},\mathcal{O}_\mathcal{C}(D))+\text{dim}\,H^0(\mathcal{C},\mathcal{O}_\mathcal{C}(D+D')) \mod 2.\]
A theta characteristic is called even (resp. odd) if the Arf invariant of the associated quadratic form is equal to $0$ (resp. $1$).

The chosen symplectic basis of $H^0(\mathcal{C},\Omega^1)$ determines a level $2$ structure $\phi:J_\mathcal{C}[2]\to (\frac{1}{2}\Z/\Z)^{2g}$, which then determines a unique even theta characteristic $\delta$ such that if $[D]$ is a theta characteristic with $\phi([D]-\delta)=(m_1,m_2)$, then $[D]$ is even if and only if the integer $4 \,{}^t\!m_1m_2$ is even \cite[Lemma 3.19]{Barron}.
Riemann's theorem says that $A(W_{-\delta})$ is the theta divisor $\Theta$ induced by the zero locus of theta function
\[\theta\begin{bsmallmatrix}0\\ 0\end{bsmallmatrix}(z,\Omega)=\sum_{n\in \Z^g}\exp\,(\pi i {}^t\!n\Omega n+2\pi i {}^t\!nz).\]
Note that for all $m,n \in \Z^g$,
\[\theta\begin{bsmallmatrix}0\\ 0\end{bsmallmatrix}(z+\Omega m+n,\Omega)=\exp(-\pi i{}^t\!m\Omega m-2\pi i{}^t\!mz)\theta\begin{bsmallmatrix}0\\ 0\end{bsmallmatrix}(z,\Omega).\]
This implies that the zero locus of $\theta\begin{bsmallmatrix}0\\ 0\end{bsmallmatrix}(z,\Omega)$ is invariant under translations by elements of
$\Lambda_\Omega$ and therefore induces a divisor on $\C^g/\Lambda_\Omega$.

For column vectors $a,b\in \Q^g$, the theta
function with characteristic $\begin{bsmallmatrix}a\\b\end{bsmallmatrix}$ is defined to be
\[\theta\begin{bsmallmatrix}a\\ b\end{bsmallmatrix}(z,\Omega)=\exp(\pi i{}^t\!a\Omega a+2\pi i{}^t\!a(z+b))\theta\begin{bsmallmatrix}0\\ 0\end{bsmallmatrix}(z+\Omega a+b,\Omega).\]
It is easy to check that for all $m,n \in \Z^g$,
\[\theta\begin{bsmallmatrix}a\\ b\end{bsmallmatrix}(z+\Omega m+n,\Omega)=\exp(-\pi i{}^t\!m\Omega m-2\pi i{}^t\!mz)\exp(2\pi i ({}^t\!an-{}^t\!b m))\theta\begin{bsmallmatrix}a\\ b\end{bsmallmatrix}(z,\Omega)\]
and
\[\theta\begin{bsmallmatrix}a+m\\ b+n\end{bsmallmatrix}(z,\Omega)=\exp(2\pi i {}^t\!an)\theta\begin{bsmallmatrix}a\\ b\end{bsmallmatrix}(z,\Omega).\]
So the function $\theta\begin{bsmallmatrix}a\\ b\end{bsmallmatrix}(z,\Omega)$ also defines a divisor on $\C^g/\Lambda_\Omega$.

Let $a,b\in (\frac{1}{N}\Z)^g$.
 Then $\Omega a+b$ represents an $N$-torsion point of $\C^g/\Lambda_\Omega$, and
 \begin{equation}
   f_{(a,b)}=(\frac{\theta\begin{bsmallmatrix}a\\ b\end{bsmallmatrix}(z,\Omega)}{\theta\begin{bsmallmatrix}0\\ 0\end{bsmallmatrix}(z,\Omega)})^N
 \end{equation}
is a well-defined function on $\C^g/\Lambda_\Omega$ with divisor $\text{div}(f_{(a,b)})=N T_{\Omega a+b}^*\Theta- N\Theta$.
Note that $f_{(a,b)}$ depends only on the classes of $a,b\in (\frac{1}{N}\Z)^g/\Z^g$. If we take the bilinear pairing $d_N$ on $J_\mathcal{C}[N]=\frac{1}{N}\Lambda_\Omega/\Lambda_\Omega$
defined by $d_N(\Omega x_1+y_1,\Omega x_2+y_2)=\exp(-2\pi i N {}^t\!x_1y_2)$, then $\langle P,Q\rangle=d_N(P,Q)/d_N(Q,P)$, and $\{f_{(a,b)}\}_{a,b\in (\frac{1}{N}\Z)^g/\Z^g}$ is easily checked to be
a normal Weil set with respect to $d_N$.

%

\section{Kummer variety}\label{kummer_variety_intro}
In this section we will mention a few basic results about Kummer varieties.

Let $P_1,\ldots,P_{2g}$ be a symplectic basis of $J_\mathcal{C}[2]$. Let $d$ be the bilinear pairing on $J_\mathcal{C}[2]$ which takes $-1$ on pairs $(P_i,P_j)$ of basis elements exactly for $j=g+i,i=1,\ldots, g$. Let $\{\tilde{f}_P\}_{P\in J_\mathcal{C}[2]}$ be a normal Weil set with respect to the bilinear pairing $d$. Assume that $\sum_{P\in \langle P_{g+1},\ldots,P_{2g}\rangle} \tilde{f}_P\ne 0$. Then
from the proof of \cite[Proposition 3, page 296]{EqnAVI}, we have the following proposition.
\begin{proposition}
For $P\in \langle P_1,\ldots,P_g\rangle$, let $\theta_P=\sum_{Q\in \langle P_{g+1},\ldots,P_{2g}\rangle} d(P,Q)\tilde{f}_{P+Q}$. Then
$\{\theta_P:P\in \langle P_1,\ldots,P_g\rangle\}$ is a basis of $H^0(J_\mathcal{C}, \mathcal{O}_{J_\mathcal{C}}(2 \mathcal{W}))$, and the
action of the theta group on $H^0(J_\mathcal{C}, \mathcal{O}_{J_\mathcal{C}}(2 \mathcal{W}))$ is given by
\[(Q,\tilde{f}_Q).\theta_P=d(Q'+P,Q'')\theta_{P+Q'},\]
where $Q=Q'+Q''$ with $Q'\in \langle P_1,\ldots,P_g\rangle$ and $Q''\in \langle P_{g+1},\ldots,P_{2g}\rangle$.
\end{proposition}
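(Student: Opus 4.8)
The plan is to work directly with the defining property of the Weil set and the central extension structure of the theta group $\mathscr{G}(\mathcal{L})$, where $\mathcal{L} = \mathcal{O}_{J_\mathcal{C}}(2\mathcal{W})$. First I would recall that by the theory of Mumford, $\dim H^0(J_\mathcal{C}, \mathcal{L}) = 2^g$ (the sheaf associated with $2$ times a principal polarization), and that $\mathscr{G}(\mathcal{L})$ acts irreducibly on this space; moreover the maximal isotropic subgroup $\langle P_{g+1},\ldots,P_{2g}\rangle$ of $J_\mathcal{C}[2]$ lifts to a subgroup of $\mathscr{G}(\mathcal{L})$ via the normal Weil set (since $d$ is trivial on pairs of these basis elements, the elements $(Q,\tilde f_Q)$ for $Q$ in that subgroup commute and multiply correctly). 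The one-dimensional subspace of $H^0(J_\mathcal{C},\mathcal{L})$ fixed by this lifted subgroup is spanned by $\theta_0 = \sum_{Q \in \langle P_{g+1},\ldots,P_{2g}\rangle} \tilde f_Q$, which is nonzero by the standing assumption. Then $\theta_P$ for $P \in \langle P_1,\ldots,P_g\rangle$ is obtained by translating $\theta_0$ by the lift $(P, \tilde f_P)$, and the $2^g$ translates form a basis because they are eigenvectors with distinct characters for the (commutative, via the chosen pairing) action of $\langle P_{g+1},\ldots,P_{2g}\rangle$ — this is exactly the content of \cite[Proposition 3, page 296]{EqnAVI}.

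Concretely, the key computation is the action formula. Given $Q = Q' + Q''$ with $Q' \in \langle P_1,\ldots,P_g\rangle$ and $Q'' \in \langle P_{g+1},\ldots,P_{2g}\rangle$, I would compute
\[
(Q,\tilde f_Q).\theta_P = \tilde f_Q \cdot T_Q^*\Big(\sum_{R \in \langle P_{g+1},\ldots,P_{2g}\rangle} d(P,R)\,\tilde f_{P+R}\Big),
\]
and then repeatedly use the normality relation $\tilde f_A \cdot T_A^*\tilde f_B = d(A,B)\,\tilde f_{A+B}$ to collapse $\tilde f_Q \cdot T_Q^* \tilde f_{P+R}$ into a scalar multiple of $\tilde f_{Q+P+R}$. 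Writing $Q+P+R = (Q'+P) + (Q''+R)$ and noting that $Q''+R$ again ranges over $\langle P_{g+1},\ldots,P_{2g}\rangle$ as $R$ does, one reindexes the sum and extracts the constant $d(Q'+P, Q'')$ using bilinearity of $d$ and the fact that $d$ vanishes on pairs within each Lagrangian half. Bookkeeping the scalars $d(P,R)$, $d(Q, P+R)$ carefully and using $d(Q',R) = d(Q'',P) = 1$ (since these are pairs from within one half) should leave precisely $d(Q'+P,Q'')\theta_{P+Q'}$.

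For the basis claim, I would argue that the $\theta_P$ span: since $\mathscr{G}(\mathcal{L})$ acts irreducibly on $H^0(J_\mathcal{C},\mathcal{L})$ and the span of $\{\theta_P\}$ is stable under the action (by the formula just derived), that span is all of $H^0$; and since $\dim H^0 = 2^g = |\langle P_1,\ldots,P_g\rangle|$, the $\theta_P$ must be linearly independent. Alternatively, linear independence follows directly because the $\theta_P$ are eigenvectors of the commuting operators $(R, \tilde f_R)$, $R \in \langle P_{g+1},\ldots,P_{2g}\rangle$, with eigenvalue (character) $R \mapsto d(P,R)$, and distinct $P$ give distinct characters by nondegeneracy of the Weil pairing restricted to $\langle P_1,\ldots,P_g\rangle \times \langle P_{g+1},\ldots,P_{2g}\rangle$.

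The main obstacle I anticipate is the scalar bookkeeping in the action formula: one must be scrupulous about which pairing arguments lie in which Lagrangian, apply the cocycle/normality identity in the right order, and correctly reindex the summation variable so that the surviving scalar is exactly $d(Q'+P,Q'')$ and the summand is exactly $\theta_{P+Q'}$ rather than some twist of it. The other point requiring care — though it is really imported from \cite{EqnAVI} — is justifying that $\theta_0 \neq 0$ is equivalent to the $\theta_P$ forming a genuine basis rather than a merely spanning set; here the nonvanishing hypothesis $\sum_{P \in \langle P_{g+1},\ldots,P_{2g}\rangle}\tilde f_P \neq 0$ is exactly what is needed, and invoking the irreducibility of the theta group representation closes the argument cleanly.
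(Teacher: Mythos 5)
Your overall strategy is exactly the one the paper relies on: the paper gives no independent argument and simply reads the statement off from the proof of \cite[Proposition 3, page 296]{EqnAVI}, which is the lifted-Lagrangian / translates-of-the-invariant-section / distinct-characters argument you reconstruct. The action formula also comes out as you claim: writing $(Q,\tilde f_Q).\theta_P=\sum_R d(P,R)\,d(Q,P+R)\,\tilde f_{P+Q+R}$ and substituting $R=R'+Q''$ does yield $d(P+Q',Q'')\,\theta_{P+Q'}$.

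One concrete slip in your bookkeeping, though: you assert $d(Q',R)=d(Q'',P)=1$ ``since these are pairs from within one half.'' Neither pair lies within one half. The identity $d(Q'',P)=1$ holds, but only because $d$ is one-sided (it equals $-1$ on the ordered pairs $(P_i,P_{g+i})$ and is trivial on the reversed pairs), not because of isotropy; and $d(Q',R)$ is \emph{not} $1$ in general --- it is precisely the factor that, after the reindexing $R=R'+Q''$, combines with $d(P,R')$ to produce the coefficient $d(P+Q',R')$ in the summand, so that the sum reassembles into $\theta_{P+Q'}$ rather than into $\sum_{R'}d(P,R')\tilde f_{P+Q'+R'}$. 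If you discard $d(Q',R)$ as you propose, the computation gives the wrong section (a character twist of $\theta_{P+Q'}$ in general). The terms that genuinely vanish are $d(Q',P)$ and $d(Q'',R)$. With that correction the calculation closes, and the rest of your argument (irreducibility plus the eigenvector/character separation for linear independence, and the role of the hypothesis $\theta_0\neq 0$) matches Mumford's proof as cited.
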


Let $\mathcal{L}=\mathcal{O}_{J_\mathcal{C}}(2 \mathcal{W})$.
For a closed point $a\in J_\mathcal{C}$, let $\mathcal{L}(a)=\mathcal{L}_a/\mathfrak{m}_a\mathcal{L}_a$, where $\mathcal{L}_a$ is the stalk of $\mathcal{L}$ at $a$ and $\mathfrak{m}_a$
is the maximal ideal of the local ring at $a$. Then we have a canonical map
\[\kappa: J_\mathcal{C}\to \mathbb{P}^{2^g-1},a\mapsto [\epsilon_a(\theta_P|_a)]_{P\in \langle P_1,\ldots,P_g\rangle },\]
where $\epsilon_a$ is an isomorphism $\mathcal{L}(a)\to k$ and $\theta_P|_a$ denotes the image of $\theta_P$ in $\mathcal{L}(a)$.

Let $a\in J_\mathcal{C}$ and $w=w_1+w_2\in J_\mathcal{C}[2]$ with $w_1\in \langle P_1,\ldots,P_g\rangle$ and $w_2\in \langle P_{g+1},\ldots,P_{2g}\rangle$. There is a convenient way to compute $\kappa(a+w)$ from $\kappa(a)$ just by permuting the coordinates up to sign. Choose an isomorphism $\epsilon_a:\mathcal{L}(a)\to k$. Then
we can define the composite isomorphism
\[\epsilon_{a+w}:\mathcal{L}(a+w)=(T_w^*\mathcal{L})(a)\xrightarrow{(w,\tilde{f}_w)} \mathcal{L}(a)\xrightarrow{\epsilon_a} k,\]
It follows immediately that
\[\kappa(a+w)=[\epsilon_{a+w}(\theta_P|_{a+w})]_{P\in \langle P_1,\ldots,P_g\rangle }=[d(P,w_2)\epsilon_a(\theta_{P+w_1}|_a)]_{P\in \langle P_1,\ldots,P_g\rangle }.\]

For $k=\C$, we can express the above result in the language of analytic theta functions.

For $\Omega\in \mathfrak{H}_g$, the $2^g$ functions
\[X_{i_1\cdots i_g}(z)=\theta\begin{bsmallmatrix}{}^t(\frac{i_1}{2}&\frac{i_2}{2}&\cdots &\frac{i_g}{2})\\ {}^t(0&0&\cdots&0)\end{bsmallmatrix}(2z,2\Omega)\,\,(i_1,i_2,\ldots,i_g\in \{0,1\})\]
can be identified with a basis of $H^0(\C^g/\Lambda_\Omega,\mathcal{O}(2\Theta))$ \cite[Proposition 1.3, page 124]{Mumford-Theta1}. These are known as the Schr\"odinger coordinates, and
we can use them to define the Kummer map
\[\kappa_\Omega: \C^g/\Lambda_\Omega\to \mathbb{P}^{2^g-1},z+\Lambda_\Omega\mapsto [X_{00\cdots0}(z):X_{00\cdots01}(z):\cdots:X_{11\cdots1}(z)],\]
whose image is isomorphic to the Kummer variety of $\C^g/\Lambda_\Omega$. The action of the subgroup of two-torsion points can now be easily deduced from the "quasi-periodicity" property of theta functions with characteristics.

The normalised Weil function $\tilde{f}_P$ can be thought of as
\begin{equation*}
  (\frac{\theta\begin{bsmallmatrix}{}^t(\frac{i_1}{2}&\frac{i_2}{2}&\cdots&\frac{i_g}{2})\\ {}^t(\frac{i_{g+1}}{2}&\frac{i_{g+2}}{2}&\cdots&\frac{i_{2g}}{2})\end{bsmallmatrix}(z,\Omega)}{\theta\begin{bsmallmatrix}{}^t(0&0&\cdots&0)\\ {}^t(0&0&\cdots&0)\end{bsmallmatrix}(z,\Omega)})^2,
\end{equation*}
where $i_1,i_2,\ldots,i_{2g}\in\{0,1\}$ satisfy $P=i_1 P_1+\cdots+i_{2g} P_{2g}$.
To compute the corresponding projective point
\[[\theta\begin{bsmallmatrix}{}^t(0&0&\cdots&0)\\ {}^t(0&0&\cdots&0)\end{bsmallmatrix}(2z,2\Omega):\theta\begin{bsmallmatrix}{}^t(0&0&\cdots&\frac{1}{2})\\ {}^t(0&0&\cdots&0)\end{bsmallmatrix}(2z,2\Omega):
\cdots:\theta\begin{bsmallmatrix}{}^t(\frac{1}{2}&\frac{1}{2}&\cdots&\frac{1}{2})\\ {}^t(0&0&\cdots&0)\end{bsmallmatrix}(2z,2\Omega)],\]
we must convert the projective point
\[[\tilde{f}_{i_1 P_1+\cdots+i_{2g} P_{2g}}]_{(i_1,\ldots,i_{2g})\in\{0,1\}^{2g}}=[\theta\begin{bsmallmatrix}{}^t(\frac{i_1}{2}&\frac{i_2}{2}&\cdots&\frac{i_g}{2})\\ {}^t(\frac{i_{g+1}}{2}&\frac{i_{g+2}}{2}&\cdots&\frac{i_{2g}}{2})\end{bsmallmatrix}(z,\Omega)^2]_{(i_1,\ldots,i_{2g})\in\{0,1\}^{2g}}.\]
This can be achieved by the following proposition.
\begin{proposition}\label{to_Schroedinger}
Let $Rep(\frac{1}{2}\Z^g/\Z^g)=\{0,\frac{1}{2}\}^g$ be the set of representatives of $\frac{1}{2}\Z^g/\Z^g$. Then for
$a,b\in Rep(\frac{1}{2}\Z^g/\Z^g)$,
 \[
    2^g \theta\begin{bsmallmatrix}
      a\\0
    \end{bsmallmatrix}(2z,2\Omega)\theta\begin{bsmallmatrix}
      b\\0
    \end{bsmallmatrix}(0,2\Omega)
    =\sum_{\beta\in Rep(\frac{1}{2}\Z^g/\Z^g) }\exp(-4\pi i\cdot {}^t\beta a) \theta\begin{bsmallmatrix}
      a+b\\ \beta
    \end{bsmallmatrix}(z,\Omega)\theta\begin{bsmallmatrix}
      a-b\\ \beta
    \end{bsmallmatrix}(z,\Omega).
  \]
\end{proposition}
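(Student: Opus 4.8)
The plan is to prove the identity by the classical method behind theta addition formulae: expand both sides as absolutely convergent Fourier series and match terms. First I would record the series form of a theta function with characteristic that is implied by the definitions in Section 2. Substituting the series for $\theta\begin{bsmallmatrix}0\\0\end{bsmallmatrix}$ into the displayed definition of $\theta\begin{bsmallmatrix}c\\d\end{bsmallmatrix}$ and completing the square (using that $\Omega$ is symmetric) gives, for $c,d\in\Q^g$,
\[
\theta\begin{bsmallmatrix}c\\d\end{bsmallmatrix}(z,\Omega)=\sum_{n\in\Z^g}\exp\bigl(\pi i\,{}^t(n+c)\Omega(n+c)+2\pi i\,{}^t(n+c)(z+d)\bigr).
\]
Since $\Omega$, and hence $2\Omega$, lies in $\mathfrak{H}_g$, every series occurring below converges absolutely, so all the rearrangements are legitimate.

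Next I would expand the right-hand side. Writing the factors $\theta\begin{bsmallmatrix}a+b\\\beta\end{bsmallmatrix}(z,\Omega)$ and $\theta\begin{bsmallmatrix}a-b\\\beta\end{bsmallmatrix}(z,\Omega)$ as series over $m\in\Z^g$ and $n\in\Z^g$ respectively, the product is a double sum whose general term carries the $\beta$-dependence $\exp\bigl(2\pi i\,{}^t\beta(m+n+2a)\bigr)$. Multiplying by the weight $\exp(-4\pi i\,{}^t\beta a)$ and summing over $\beta\in\{0,\frac12\}^g$, the $a$-contributions cancel and one obtains $\sum_{\beta}\exp(2\pi i\,{}^t\beta(m+n))=\prod_{j=1}^{g}\bigl(1+(-1)^{(m+n)_j}\bigr)$, which equals $2^g$ if $m+n\in 2\Z^g$ and $0$ otherwise. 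Thus the right-hand side equals $2^g$ times the sum of the remaining exponentials over pairs $(m,n)$ with $m+n$ even.

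The key step is the substitution $m=k+t$, $n=k-t$, which is a bijection of $\Z^g\times\Z^g$ onto $\{(m,n):m+n\in 2\Z^g\}$ (note that $m+n$ even forces $m-n$ even). Putting $p=k+a$ and $q=t+b$, the two quadratic exponents $\pi i\,{}^t(m+a+b)\Omega(m+a+b)+\pi i\,{}^t(n+a-b)\Omega(n+a-b)$ add up to $\pi i\,{}^tp(2\Omega)p+\pi i\,{}^tq(2\Omega)q$ (the cross terms cancel), the part linear in $z$ becomes $2\pi i\,{}^tp(2z)$, and there is no $q$-term linear in $z$. Hence the double sum factors as
\[
\Bigl(\sum_{k\in\Z^g}\exp\bigl(\pi i\,{}^t(k+a)(2\Omega)(k+a)+2\pi i\,{}^t(k+a)(2z)\bigr)\Bigr)\Bigl(\sum_{t\in\Z^g}\exp\bigl(\pi i\,{}^t(t+b)(2\Omega)(t+b)\bigr)\Bigr),
\]
and by the series formula of the first step these two factors are exactly $\theta\begin{bsmallmatrix}a\\0\end{bsmallmatrix}(2z,2\Omega)$ and $\theta\begin{bsmallmatrix}b\\0\end{bsmallmatrix}(0,2\Omega)$, which gives the claimed equality.

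There is no real obstacle here; the only places requiring care are the sign bookkeeping in the $\beta$-sum — checking that the weight $\exp(-4\pi i\,{}^t\beta a)$ is precisely what kills the stray $a$-terms, so that only the parity of $m+n$ survives — and verifying that the reindexing $(k,t)\mapsto(k+t,k-t)$ really is a bijection onto the even-sum sublattice. Everything else is the standard complete-the-square-and-factor computation, justified by absolute convergence.
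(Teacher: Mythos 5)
Your proof is correct, but it takes a different route from the paper. The paper's entire proof is a one-line citation: it invokes Theorem 1.3 of Koizumi's paper on theta relations, specialized to the matrices $T=\begin{bsmallmatrix}\frac12&\frac12\\ \frac12&-\frac12\end{bsmallmatrix}$ and $\Gamma=\begin{bsmallmatrix}2&0\\0&2\end{bsmallmatrix}$. You instead give a self-contained verification by Fourier expansion: the character sum over $\beta\in\{0,\frac12\}^g$ correctly isolates the pairs $(m,n)$ with $m+n\in2\Z^g$ (the weight $\exp(-4\pi i\,{}^t\beta a)$ does exactly cancel the stray $2a$ in the exponent), and the reindexing $m=k+t$, $n=k-t$ is indeed a bijection onto that sublattice since $m-n\equiv m+n\pmod{2\Z^g}$; the completion of squares then factors the double sum into the two level-$(2z,2\Omega)$ theta series. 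This is in fact the computation underlying Koizumi's theorem in this special case --- your substitution $(m,n)\mapsto(k,t)$ is precisely the isogeny encoded by the matrix $T$ above --- so the mathematics agrees; what your version buys is transparency and independence from the external reference, at the cost of a page of bookkeeping, whereas the paper's citation buys brevity and places the identity in the general framework (arbitrary $T$ and $\Gamma$) that is also used for Proposition 3.3. One small point of care: your series formula $\theta\begin{bsmallmatrix}c\\d\end{bsmallmatrix}(z,\Omega)=\sum_{n}\exp(\pi i\,{}^t(n+c)\Omega(n+c)+2\pi i\,{}^t(n+c)(z+d))$ does follow from the paper's definition by completing the square using the symmetry of $\Omega$, as you say, and it is worth actually displaying that two-line check since the whole argument rests on it.
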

\begin{proof}
Apply the Theorem 1.3 of \cite{Koizumi} with the matrices $T=\begin{bsmallmatrix}
      \frac{1}{2}&\frac{1}{2}\\\frac{1}{2}&-\frac{1}{2}
    \end{bsmallmatrix}$ and $\Gamma=\begin{bsmallmatrix}
      2&0\\0&2
    \end{bsmallmatrix}$.
\end{proof}

For the other direction, we can use the following proposition.
\begin{proposition}\label{schroedinger_to}
Let $Rep(\frac{1}{2}\Z^g/\Z^g)=\{0,\frac{1}{2}\}^g$ be the set of representatives of $\frac{1}{2}\Z^g/\Z^g$. Then for
$a,b\in Rep(\frac{1}{2}\Z^g/\Z^g)$,
 \[ \theta\begin{bsmallmatrix}
      a\\b
    \end{bsmallmatrix}(z,\Omega)^2=\sum_{\beta\in Rep(\frac{1}{2}\Z^g/\Z^g) }(-1)^{4{}^t\beta b} \theta\begin{bsmallmatrix}
      \beta\\0
    \end{bsmallmatrix}(2z,2\Omega)\theta\begin{bsmallmatrix}
      \beta+a\\ 0
    \end{bsmallmatrix}(0,2\Omega)\]
\end{proposition}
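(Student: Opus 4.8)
The plan is to obtain the identity by inverting the relation of Proposition~\ref{to_Schroedinger} by means of finite Fourier analysis on the $2$-torsion group $(\Z/2\Z)^g$; alternatively one could re-run the proof of Proposition~\ref{to_Schroedinger}, invoking Theorem~1.3 of \cite{Koizumi} with a suitably chosen pair of matrices $T,\Gamma$, but the Fourier route is self-contained given what has already been established. Throughout set $H=Rep(\frac{1}{2}\Z^g/\Z^g)=\{0,\frac{1}{2}\}^g$.

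First I would recast Proposition~\ref{to_Schroedinger} into a form with a square on the right-hand side. Since $2b'\in\Z^g$ for $b'\in H$, the quasi-periodicity relation $\theta\begin{bsmallmatrix}a+m\\b+n\end{bsmallmatrix}(z,\Omega)=\exp(2\pi i\,{}^t\!a n)\,\theta\begin{bsmallmatrix}a\\b\end{bsmallmatrix}(z,\Omega)$, used with $n=0$, gives $\theta\begin{bsmallmatrix}a'-b'\\\beta\end{bsmallmatrix}(z,\Omega)=\theta\begin{bsmallmatrix}a'+b'\\\beta\end{bsmallmatrix}(z,\Omega)$; the same relation (again with $n=0$) lets one replace the characteristic $a'+b'$ by its representative $c\in H$ and rewrite $\theta\begin{bsmallmatrix}b'\\0\end{bsmallmatrix}(0,2\Omega)$ as $\theta\begin{bsmallmatrix}a'+c\\0\end{bsmallmatrix}(0,2\Omega)$, using $b'\equiv a'+c\pmod{\Z^g}$. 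Observing also that $4\,{}^t\!\beta a'\in\Z$, so that $\exp(-4\pi i\,{}^t\!\beta a')=(-1)^{4\,{}^t\!\beta a'}$, Proposition~\ref{to_Schroedinger} takes the form, valid for all $a',c\in H$,
\[2^g\,\theta\begin{bsmallmatrix}a'\\0\end{bsmallmatrix}(2z,2\Omega)\,\theta\begin{bsmallmatrix}a'+c\\0\end{bsmallmatrix}(0,2\Omega)=\sum_{\beta\in H}(-1)^{4\,{}^t\!\beta a'}\,\theta\begin{bsmallmatrix}c\\\beta\end{bsmallmatrix}(z,\Omega)^2.\]

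The second step is the Fourier inversion. Under the identification $H\cong(\Z/2\Z)^g$ sending $\frac{1}{2}\bar\beta$ to $\bar\beta$, the pairing $(\beta,a')\mapsto(-1)^{4\,{}^t\!\beta a'}$ becomes $(\bar\beta,\bar a')\mapsto(-1)^{{}^t\!\bar\beta\bar a'}$, a perfect pairing, so character orthogonality gives $\sum_{a'\in H}(-1)^{4\,{}^t\!(\gamma+\beta)a'}=2^g$ when $\gamma=\beta$ in $H$, and $0$ otherwise. Fixing $c,\gamma\in H$, I would multiply the displayed identity by $(-1)^{4\,{}^t\!\gamma a'}$, sum over $a'\in H$, interchange the two finite sums on the right, and apply this orthogonality relation; only the term $\beta=\gamma$ survives, and cancelling the factor $2^g$ leaves
\[\theta\begin{bsmallmatrix}c\\\gamma\end{bsmallmatrix}(z,\Omega)^2=\sum_{a'\in H}(-1)^{4\,{}^t\!\gamma a'}\,\theta\begin{bsmallmatrix}a'\\0\end{bsmallmatrix}(2z,2\Omega)\,\theta\begin{bsmallmatrix}a'+c\\0\end{bsmallmatrix}(0,2\Omega).\]
Renaming $c\to a$, $\gamma\to b$, $a'\to\beta$, and using ${}^t\!b\beta={}^t\!\beta b$, this is precisely the asserted formula.

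The only point requiring care --- and where a careless argument would go astray --- is the bookkeeping with half-integer characteristics. One must check that each reduction of a characteristic modulo $\Z^g$ performed above moves only the \emph{upper} half of the characteristic, so that the correcting exponential $\exp(2\pi i\,{}^t\!a n)$ in the quasi-periodicity formula equals $1$; and one must verify that $4\,{}^t\!\beta a'$ is an integer for $\beta,a'\in\{0,\frac{1}{2}\}^g$, so that $(-1)^{4\,{}^t\!\beta a'}$ is genuinely $\{\pm1\}$-valued and the orthogonality computation is valid. Neither is hard, but both are essential to getting the signs right.
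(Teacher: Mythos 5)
Your argument is correct, but it is not the paper's route. The paper proves Proposition~\ref{schroedinger_to} the same way it proves Proposition~\ref{to_Schroedinger}: by a second, independent application of Theorem~1.3 of Koizumi, this time with $T=\begin{bsmallmatrix}\frac12&\frac12\\\frac12&-\frac12\end{bsmallmatrix}$ and $\Gamma=I_2$ (rather than $2I_2$), followed by an appeal to Mumford's identification of the functions $\theta\begin{bsmallmatrix}\beta\\0\end{bsmallmatrix}(2z,2\Omega)$ with a basis of $H^0(\mathcal{O}(2\Theta))$. You instead treat Proposition~\ref{to_Schroedinger} as the primitive input and recover Proposition~\ref{schroedinger_to} from it by finite Fourier inversion over $(\Z/2\Z)^g$; your preliminary normalizations (collapsing $a-b$ to $a+b$ via the characteristic-shift relation with $n=0$, reparametrizing $b$ as $a'+c$, and checking that $(-1)^{4\,{}^t\!\beta a'}$ is a genuine character so that orthogonality applies) are all sound, and the bookkeeping with the upper halves of the characteristics is handled correctly. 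What your route buys is self-containedness and transparency: it exhibits the two propositions as Fourier duals of one another and avoids a second invocation of Koizumi and of Mumford's basis result. What the paper's route buys is brevity and symmetry of presentation, since both propositions then follow from the same external theorem by only changing $\Gamma$. Either proof is acceptable.
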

\begin{proof}
Apply the Theorem 1.3 of \cite{Koizumi} with the matrices $T=\begin{bsmallmatrix}
      \frac{1}{2}&\frac{1}{2}\\\frac{1}{2}&-\frac{1}{2}
    \end{bsmallmatrix}$ and $\Gamma=\begin{bsmallmatrix}
      1&0\\0&1
    \end{bsmallmatrix}$. Then use \cite[Proposition 1.3, page 124]{Mumford-Theta1}.
\end{proof}

\begin{remark}
If $g=3$ and $\C^3/\Lambda_\Omega$ is the Jacobian variety of a non-hyperelliptic curve, then the image of $\kappa_\Omega$ can be described as the
singular locus of the Coble quartic hypersurface in $\mathbb{P}^7$ \cite{CobleQuartic}. The defining polynomial of this quartic hypersurface
can be written as
\begin{eqnarray*}
F_\Omega&=& r(X_{000}^4+X_{001}^4+X_{010}^4+X_{011}^4+X_{100}^4+X_{101}^4+X_{110}^4+X_{111}^4) \\
        && +\, t_{001}(X_{000}^2X_{001}^2+X_{010}^2X_{011}^2+X_{100}^2X_{101}^2+X_{110}^2X_{111}^2)\\
        && +\, t_{010}(X_{000}^2X_{010}^2+X_{001}^2X_{011}^2+X_{100}^2X_{110}^2+X_{101}^2X_{111}^2)\\
        &&+\, t_{011}(X_{000}^2X_{011}^2+X_{001}^2X_{010}^2+X_{100}^2X_{111}^2+X_{101}^2X_{110}^2)\\
        &&+\, t_{100}(X_{000}^2X_{100}^2+X_{001}^2X_{101}^2+X_{010}^2X_{110}^2+X_{011}^2X_{111}^2)\\
        &&+\, t_{101}(X_{000}^2X_{101}^2+X_{001}^2X_{100}^2+X_{010}^2X_{111}^2+X_{011}^2X_{110}^2)\\
        &&+\, t_{110}(X_{000}^2X_{110}^2+X_{001}^2X_{111}^2+X_{010}^2X_{100}^2+X_{011}^2X_{101}^2)\\
        &&+\, t_{111}(X_{000}^2X_{111}^2+X_{001}^2X_{110}^2+X_{010}^2X_{101}^2+X_{011}^2X_{100}^2)\\
        &&+\, s_{001}(X_{000}X_{010}X_{100}X_{110}+X_{001}X_{011}X_{101}X_{111})\\
        &&+\, s_{010}(X_{000}X_{001}X_{100}X_{101}+X_{010}X_{011}X_{110}X_{111})\\
        &&+\, s_{011}(X_{000}X_{011}X_{100}X_{111}+X_{001}X_{010}X_{101}X_{110})\\
        &&+\, s_{100}(X_{000}X_{001}X_{010}X_{011}+X_{100}X_{101}X_{110}X_{111})\\
        &&+\, s_{101}(X_{000}X_{010}X_{101}X_{111}+X_{001}X_{011}X_{100}X_{110})\\
        &&+\, s_{110}(X_{000}X_{001}X_{110}X_{111}+X_{010}X_{011}X_{100}X_{101})\\
        &&+\, s_{111}(X_{000}X_{011}X_{101}X_{110}+X_{001}X_{010}X_{100}X_{111}),
\end{eqnarray*}
where by abuse of notation we denote also by $X_{000},\ldots,X_{111}$ the Schr\"odinger coordinates on $\mathbb{P}^7$, and
$r,t_{001},\ldots,t_{111},s_{001},\ldots,s_{111}$ are coefficients. This expression can be found in
\cite{kummer3,CobleQuarticequation}. So to determine equations $\{\frac{\partial F_\Omega}{\partial X_{000}}=0,\ldots,\frac{\partial F_\Omega}{\partial X_{111}}=0\}$ for the Kummer variety, we need only know two points in general.
However, we do not need equations of the Kummer varieties, even in determining the projective change of coordinates which is used to
identify the Kummer varieties of $J_\mathcal{C}/V$ and $J_\mathcal{D}$ (see subsection \ref{image_computation}).
\end{remark}

\section{Quotients of Jacobians}
Let $\mathcal{C}$ be a hyperelliptic curve of genus $g$. Assume that $\mathcal{C}$ is given by $y^2=f(x)$, where $f$ is a polynomial of degree $2g+1$.
Let $\infty$ be the point at infinity. Then the divisor $\mathcal{W}=W_{-[(g-1)(\infty)]}$ on $J_\mathcal{C}$ is symmetric.

Let $V\subseteq J_\mathcal{C}[\ell]$ be a maximal isotropic subgroup with respect to the Weil pairing $e_\ell$. Let $\psi: J_\mathcal{C}\to J_\mathcal{C}/V$ be the quotient map.
Let $d_\ell$ be the bilinear pairing on $J_\mathcal{C}[\ell]$ given by $d_\ell(P,Q)=e_\ell(P,Q)^{(\ell+1)/2}$.
Then we have a normal Weil set $\{\tilde{f}_P\}_{P\in J_\mathcal{C}[\ell]}$ with the property that for each $P\in J_\mathcal{C}[\ell]$, $[-1]^*\tilde{f}_P=\tilde{f}_{-P}$.
One checks easily that
\[\tilde{K}=\{(P,\tilde{f}_P): P\in V\}\] is a level subgroup of $\mathscr{G}(\mathcal{L})$ for sheaf $\mathcal{L}=\mathcal{O}_{J_\mathcal{C}}(\ell \mathcal{W})$, so it
determines an invertible sheaf $\mathcal{M}$ on $J_\mathcal{C}/V$ such that $\psi^*\mathcal{M}\simeq \mathcal{L}$. This sheaf defines a symmetric principal polarization, so there is a unique effective divisor $Y$ on $J_\mathcal{C}/V$ with $\mathcal{M}\simeq \mathcal{O}_{J_\mathcal{C}/V}(Y)$. We set $X=\psi^*Y$. Then $X-\ell \mathcal{W}$ is the divisor of a rational function $\tilde{\theta}$, which by the theory of descent is a section of $\mathcal{L}$ invariant under $\tilde{K}$. Recall that the group $\mathscr{G}(\mathcal{L})$ acts on $H^0(J_\mathcal{C},\mathcal{L})$ by
\[(P,f_P).s=f_PT_P^*s.\]
Since $\text{dim}\,H^0(J_\mathcal{C},\mathcal{L})^{\tilde{K}}=\text{dim}\,H^0(J_{\mathcal{C}}/V,\mathcal{M})=1$, we can take
\[\tilde{\theta}=\sum_{P \in V }(P,\tilde{f}_P).1=\sum_{P\in V} \tilde{f}_P\] if it is non-zero.

\begin{remark}
The isotropic subgroup $V$ considered is assumed to be given as part of the input. In fact, we can compute a basis of the $\ell$-torsion subgroup by using the method of Couveignes \cite{Couveignes_torsion}, and
 transform it into a symplectic one for the Weil pairing by using a modified version of the Gram-Schmidt process. Then we can enumerate all maximal isotropic subgroups and the rational ones among them. For our choice of $d_\ell$, $\tilde{K}$ is always a level subgroup as long as $V$ is isotropic.
\end{remark}

Now we set $\mathcal{A}=J_\mathcal{C}/V$ and suppose that $(\mathcal{A},Y)$ is the Jacobian variety of a curve $\mathcal{D}$.

\subsection{Equation of $\mathcal{D}$}\label{subsec: equation_of_D}
We first compute the normalized Weil functions for points in $\mathcal{A}[2]$. Evaluating them at a certain point, we obtain the Moduli point.
Then we consider the case $g=3$, and derive an equation for $\mathcal{D}$ by using formulae from the theory of analytic theta functions.

Choose a Weil set $\{\eta_S\}_{S\in J_\mathcal{C}[2]}$, then it follows from
\[\text{div}(\eta_S^\ell T_S^*\tilde{\theta}^2/\tilde{\theta}^2)=\ell (2T_S^*\mathcal{W}-2\mathcal{W})+T_S^*(2X-2\ell \mathcal{W})-(2X-2\ell \mathcal{W})=2 T_S^*X-2X
\]
that there are Weil functions $g_{\bar{S}}$ with  $\text{div}(g_{\bar{S}})=2 T_{\bar{S}}^*Y-2Y$ such that $\psi^*g_{\bar{S}}=\eta_S^\ell T_S^*\tilde{\theta}^2/\tilde{\theta}^2$.
Here we denote by $\bar{S}$ the image of $S$ in $\mathcal{A}$.
Let $S_1,\ldots, S_{2g}$ be a symplectic basis of $J_\mathcal{C}[2]$. Then $e_2(S_i,S_j)=e_2(\bar{S}_i,\bar{S}_j)$, which implies that $\bar{S}_1,\ldots, \bar{S}_{2g}$ is a symplectic basis of $\mathcal{A}[2]$. Let $d'$ be the bilinear pairing on $J_\mathcal{C}[2]$ which takes $-1$ on pairs $(S_i,S_j)$ of basis elements exactly for $j=g+i,i=1,\ldots, g$.
Then it also can be viewed as a bilinear pairing on $\mathcal{A}[2]$ by setting $d'(\bar{S}_i,\bar{S}_j)=d'(S_i,S_j)$.
Using $d'$, we can compute a normalization $\{\beta_{\bar{S}}\}$ of the Weil set $\{g_{\bar{S}}\}_{S\in J_\mathcal{C}[2]}$.
As we have seen, this is equivalent to solving the equations
\[\frac{\beta_{\bar{P}}\beta_{\bar{Q}}}{\beta_{\bar{P}+\bar{Q}}}=\gamma'(\bar{P},\bar{Q}),\]
where $\gamma'(\bar{P},\bar{Q})=d'(\bar{P},\bar{Q})\frac{g_{\bar{P}+\bar{Q}}}{g_{\bar{P}}T_{\bar{P}}^*g_{\bar{Q}}}$. Note that $\gamma'(\bar{P},\bar{Q})\in k^*$,
so
\begin{equation}\label{normalization}
  \gamma'(\bar{P},\bar{Q})=\psi^*\gamma'(\bar{P},\bar{Q})=d'(P,Q)(\frac{\eta_{P+Q}}{\eta_PT_P^*\eta_Q})^\ell.
\end{equation}

The symplectic basis $S_1,\ldots, S_{2g}$ determines a unique element $S_\delta\in J_\mathcal{C}[2]$ such that if $\{\tilde{\eta}_S\}_{S\in J_\mathcal{C}[2]}$ is a normal Weil set
with respect to $d'$ and $S=a_1 S_1+\cdots+a_{2g} S_{2g}$, then $\tilde{\eta}_S(S_\delta)$ is the algebraic version of the ratio
\[\frac{\theta\begin{bsmallmatrix}{}^t(\frac{a_1}{2},\cdots,\frac{a_g}{2})\\ {}^t(\frac{a_{g+1}}{2},\cdots,\frac{a_{2g}}{2})\end{bsmallmatrix}(0,\Omega)^2}{\theta\begin{bsmallmatrix}0\\ 0\end{bsmallmatrix}(0,\Omega)^2}\]
for $k=\C$.
We claim that $\bar{S}_\delta=\psi(S_\delta)$ is the point determined by the symplectic basis $\bar{S}_1,\ldots, \bar{S}_{2g}$ with similar property for the normal Weil set $\{\beta_{\bar{S}}g_{\bar{S}}\}_{S\in J_\mathcal{C}[2]}$.
Using the notation of \cite[Page 304]{EqnAVI}, it is enough to show that
\[e_*^{\mathcal{O}(\mathcal{W})}(S)=e_*^{\mathcal{O}(\ell \mathcal{W})}(S)=e_*^{\mathcal{O}(X)}(S)=e_*^{\mathcal{O}(Y)}(\bar{S})\]
for all $S\in J_\mathcal{C}[2]$.
Indeed, this follows from \cite[Proposition 2, Page 307]{EqnAVI} and \cite[Page 304]{EqnAVI}.

To compute $S_\delta$, we note that every theta characteristics of $\mathcal{C}$ can be represented by a divisor of form
\[\theta_T=\sum_{P\in T}(P)-(g-1-\#T)(\infty),\]
where $T$ is a set of Weierstrass points with $\#T\equiv g+1\mod 2$, and that $[\theta_T]$ is even if and only if $\#T\equiv g+1\mod 4$ \cite[Proposition 6.1]{Mumford-Thomae}.
So if we write $S_\delta=\epsilon_1S_1+\cdots+\epsilon_{2g}S_{2g}$ and $S_i=[\theta_{T_i}-(g-1)(\infty)]$ for $i=1,2,\ldots,2g$, then
\[\frac{\#T_i-(g+1)}{2}\equiv \epsilon_{g+i}+\epsilon_1\epsilon_{g+1}+\cdots+\epsilon_g\epsilon_{2g}\mod 2,\]
where $\epsilon_{g+i}=\epsilon_{i-g}$ for $i=g+1,\ldots,2g$. This gives $S_\delta$.

So far we have obtained the values
\begin{equation}\label{WeilfunctiononCodomain}
  \beta_{\bar{S}}g_{\bar{S}}(\bar{S}_\delta)=\beta_{\bar{S}}\eta_{S}^\ell(S_\delta+v_0)\tilde{\theta}^2(S_\delta+v_0+S)/\tilde{\theta}^2(S_\delta+v_0)(v_0\in V),
\end{equation}
which over $\C$ correspond to
\begin{equation*}
  (\frac{\theta\begin{bsmallmatrix}{}^t(\frac{a_1}{2},\cdots,\frac{a_g}{2})\\ {}^t(\frac{a_{g+1}}{2},\cdots,\frac{a_{2g}}{2})\end{bsmallmatrix}(0,\Omega')}{\theta\begin{bsmallmatrix}0\\ 0\end{bsmallmatrix}(0,\Omega')})^2
\end{equation*}
with $S=a_1 S_1+\cdots+a_{2g} S_{2g}$.
Now assume that $\text{dim}\,\mathcal{A}=3$, and that $\mathcal{D}$ is non-hyperelliptic (this means that $\beta_{\bar{S}}g_{\bar{S}}(\bar{S}_\delta)\ne 0$ for all $a_1,\ldots,a_6\in\{0,1\}$ with $a_1a_4+a_2a_5+a_3a_6$ even). We follow Milio \cite{EneaMilio}
in finding an equation of $\mathcal{D}$. The approach is based on \cite{Weber,Riemann}.
Note that the canonical embedding of $\mathcal{D}$ is a non-singular quartic in $\mathbb{P}^2$.
Such a plane quartic is uniquely determined by an Aronhold system of bitangents which is a set of $7$ bitangents with the property that the intersection points of three arbitrary bitangents in the set with the quartic do not lie on a conic. Riemann \cite{Riemann} gave a construction of the plane quartic from an Aronhold system.

There are $288$ Aronhold systems for a given plane quartic and we use the same one as in \cite{EneaMilio,Weber-formula}.
For $i=0,1,\ldots,63$, we set
\[\vartheta_i=\theta\begin{bsmallmatrix}{}^t\!(\frac{c_3}{2},\frac{c_4}{2},\frac{c_5}{2})\\{}^t\!(\frac{c_0}{2},\frac{c_1}{2},\frac{c_2}{2})\end{bsmallmatrix}(0,\Omega'),\]
where the $c_j$ are integers in $\{0,1\}$ such that $i=\sum_{j=0}^5c_j2^j$. Then
\begin{equation}\label{Aronhold_system}
  \begin{gathered}
  x=0, y=0,z=0,x+y+z=0,\\
  \alpha_{i1}x+\alpha_{i2}y+\alpha_{i3}z=0\, (i=1,2,3)
\end{gathered}
\end{equation}
with
\begin{align*}
  \alpha_{11}&=\frac{\vartheta_{5}\vartheta_{12}}{\vartheta_{33}\vartheta_{40}},&
  \alpha_{12}&=\frac{\vartheta_{21}\vartheta_{28}}{\vartheta_{49}\vartheta_{56}},&
  \alpha_{13}&=\frac{\vartheta_{7}\vartheta_{14}}{\vartheta_{35}\vartheta_{42}},\\
  \alpha_{21}&=\frac{\vartheta_{5}\vartheta_{27}}{\vartheta_{40}\vartheta_{54}},&
  \alpha_{22}&=\frac{\vartheta_{2}\vartheta_{28}}{\vartheta_{47}\vartheta_{49}},&
  \alpha_{23}&=\frac{\vartheta_{14}\vartheta_{16}}{\vartheta_{35}\vartheta_{61}},\\
  \alpha_{31}&=-\frac{\vartheta_{12}\vartheta_{27}}{\vartheta_{33}\vartheta_{54}},&
  \alpha_{32}&=\frac{\vartheta_{2}\vartheta_{21}}{\vartheta_{47}\vartheta_{56}},&
  \alpha_{33}&=\frac{\vartheta_{7}\vartheta_{16}}{\vartheta_{42}\vartheta_{61}}.
\end{align*}
is an Aronhold system for $\mathcal{D}$.

So we have to compute the $\alpha_{ij}$. The approach was explained in \cite{EneaMilio}. First note that the values $\vartheta_{i}^2/\vartheta_{0}^2$ are the $\beta_{\bar{S}}g_{\bar{S}}(\bar{S}_\delta)$. In particular, we can obtain $\alpha_{11}^2$, $\alpha_{21}^2$ and $\alpha_{31}^2$.
Consider the following equations (they are taken from \cite{EneaMilio} and computed from the Riemann's theta formula)
\begin{align*}
\vartheta_{5}\vartheta_{12}\vartheta_{33}\vartheta_{40}-\vartheta_{21}\vartheta_{28}\vartheta_{49}\vartheta_{56}-\vartheta_{42}\vartheta_{35}\vartheta_{14}\vartheta_{7}=0,\\
\vartheta_{49}\vartheta_{47}\vartheta_{28}\vartheta_{2}-\vartheta_{54}\vartheta_{40}\vartheta_{27}\vartheta_{5}-\vartheta_{61}\vartheta_{35}\vartheta_{16}\vartheta_{14}=0,\\
\vartheta_{54}\vartheta_{33}\vartheta_{27}\vartheta_{12}-\vartheta_{56}\vartheta_{47}\vartheta_{21}\vartheta_{2}+\vartheta_{61}\vartheta_{42}\vartheta_{16}\vartheta_{7}=0.
\end{align*}
From the first one, we have
\[(\vartheta_{42}\vartheta_{35}\vartheta_{14}\vartheta_{7})^2=(\vartheta_{5}\vartheta_{12}\vartheta_{33}\vartheta_{40})^2+(\vartheta_{21}\vartheta_{28}\vartheta_{49}\vartheta_{56})^2
-2\vartheta_{5}\vartheta_{12}\vartheta_{21}\vartheta_{28}\vartheta_{33}\vartheta_{40}\vartheta_{49}\vartheta_{56}\]
and
\[(\vartheta_{21}\vartheta_{28}\vartheta_{49}\vartheta_{56})^2=(\vartheta_{5}\vartheta_{12}\vartheta_{33}\vartheta_{40})^2+(\vartheta_{42}\vartheta_{35}\vartheta_{14}\vartheta_{7})^2
-2\vartheta_{5}\vartheta_{7}\vartheta_{12}\vartheta_{14}\vartheta_{33}\vartheta_{35}\vartheta_{40}\vartheta_{42},\]
from which we can obtain $\alpha_{11}\alpha_{12}$ and $\alpha_{11}\alpha_{13}$. Hence if we choose an arbitrary root $\epsilon\alpha_{11}$ of $\alpha_{11}^2$ ($\epsilon\in\{\pm1\}$), then dividing $\alpha_{11}\alpha_{12}$ and $\alpha_{11}\alpha_{13}$ respectively by $\epsilon\alpha_{11}$, we get $\epsilon\alpha_{12}$ and $\epsilon\alpha_{13}$, and so the bitangent $\epsilon(\alpha_{11}x+\alpha_{12}y+\alpha_{13}z)=0$ is obtained. Using the second and the third equations we see that the same argument gives the bitangents $\alpha_{i1}x+\alpha_{i2}y+\alpha_{i3}z=0$ for $i=2,3$.

Using the above notation, we have \cite[Proposition 3]{Ritzenthaler}:
\begin{theorem}[Riemann]
The curve $\mathcal{D}$ has an equation
\[(x\xi_1+y\xi_2-z\xi_3)^2=4xy\xi_1\xi_2,\]
where $\xi_1,\xi_2,\xi_3$ are linear functions of $x,y,z$ defined by
\[\left(
    \begin{matrix}
      1 & 1 & 1 \\
      \frac{1}{\alpha_{11}} & \frac{1}{\alpha_{12}} & \frac{1}{\alpha_{13}} \\
     \frac{1}{\alpha_{21}} & \frac{1}{\alpha_{22}} & \frac{1}{\alpha_{23}} \\
      \frac{1}{\alpha_{31}} & \frac{1}{\alpha_{32}} & \frac{1}{\alpha_{33}}\\
    \end{matrix}
  \right)\left(
           \begin{matrix}
             \xi_1 \\
             \xi_2 \\
             \xi_3\\
           \end{matrix}
         \right)=-\left(
    \begin{matrix}
      1 & 1 & 1 \\
      k_1\alpha_{11} & k_1\alpha_{12} & k_1\alpha_{13} \\
     k_2\alpha_{21} & k_2\alpha_{22} & k_2\alpha_{23} \\
      k_3\alpha_{31} & k_3\alpha_{32} & k_3\alpha_{33}\\
    \end{matrix}
  \right)\left(
           \begin{matrix}
             x \\
             y \\
             z\\
           \end{matrix}
         \right)
\]
with $k_1, k_2, k_3$ solutions of
\[\left(\begin{matrix}
       \frac{1}{\alpha_{11}} & \frac{1}{\alpha_{21}} & \frac{1}{\alpha_{31}} \\
     \frac{1}{\alpha_{12}} & \frac{1}{\alpha_{22}} & \frac{1}{\alpha_{32}} \\
      \frac{1}{\alpha_{13}} & \frac{1}{\alpha_{23}} & \frac{1}{\alpha_{33}}\\
    \end{matrix}\right)
  \left(
           \begin{matrix}
             u_1 \\
             u_2 \\
             u_3 \\
           \end{matrix}
         \right)\!=\!\left(
                   \begin{matrix}
                     -1 \\
                     -1 \\
                    -1 \\
                   \end{matrix}
                 \right),
                 \left(
                   \begin{matrix}
                      u_1\alpha_{11} & u_2\alpha_{21} & u_3\alpha_{31} \\
                      u_1\alpha_{12} & u_2\alpha_{22} & u_3\alpha_{32} \\
                      u_1\alpha_{13} & u_2\alpha_{23} & u_3\alpha_{33}\\
                   \end{matrix}
                 \right)\left(
                          \begin{matrix}
                            k_1 \\
                            k_2 \\
                            k_3 \\
                          \end{matrix}
                        \right)\!=\!\left(
                                  \begin{matrix}
                                    -1 \\
                                    -1 \\
                                    -1 \\
                                  \end{matrix}
                                \right).
\]
\end{theorem}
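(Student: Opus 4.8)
The plan is to prove that the quartic
\[
Q:\quad (x\xi_1+y\xi_2-z\xi_3)^2=4xy\,\xi_1\xi_2
\]
is precisely $\mathcal{D}$, in three steps: (i) check that $Q$ is a smooth plane quartic having the seven lines of \eqref{Aronhold_system} among its bitangents; (ii) check that these seven bitangents of $Q$ form an Aronhold system; (iii) invoke Riemann's classical reconstruction theorem, that a smooth plane quartic is determined by the seven lines of any one of its Aronhold systems. Since \eqref{Aronhold_system} is, by the construction of the $\alpha_{ij}$ recalled just above the statement, an Aronhold system of $\mathcal{D}$, and since the coordinate frame is already pinned down by the lines $x=0$, $y=0$, $z=0$, $x+y+z=0$, steps (i)--(iii) force $\mathcal{D}=Q$.

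To carry out (i), first rewrite $Q$ in square-root form: with $u=x\xi_1$, $v=y\xi_2$, $w=z\xi_3$ the equation expands to $u^2+v^2+w^2-2uv-2vw-2wu=0$, and the elementary identity
\[
u^2+v^2+w^2-2uv-2vw-2wu=-(\sqrt u+\sqrt v+\sqrt w)(-\sqrt u+\sqrt v+\sqrt w)(\sqrt u-\sqrt v+\sqrt w)(\sqrt u+\sqrt v-\sqrt w)
\]
shows $Q=\{\sqrt{x\xi_1}+\sqrt{y\xi_2}+\sqrt{z\xi_3}=0\}$, independently of the signs. Restricting $Q$ to $x=0$ gives $(y\xi_2-z\xi_3)^2=0$, a perfect square, so $x=0$ is a bitangent, and by the symmetry of the square-root form so are $y=0$ and $z=0$. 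For $x+y+z=0$, use the first row of the defining matrix equation, namely $\xi_1+\xi_2+\xi_3=-(x+y+z)$; substituting $z=-x-y$ and $\xi_3=-\xi_1-\xi_2$ collapses $Q$ to $(x\xi_2-y\xi_1)^2=0$ on that line, so it too is bitangent. For the lines $\ell_i:\alpha_{i1}x+\alpha_{i2}y+\alpha_{i3}z=0$ one shows that the remaining three rows $\xi_1/\alpha_{i1}+\xi_2/\alpha_{i2}+\xi_3/\alpha_{i3}=-k_i\ell_i$ of the matrix equation are exactly the conditions forcing $Q|_{\ell_i}$ to be a perfect square: restricting $Q$ to $\ell_i$, eliminating one variable, and demanding that the resulting binary quartic be the square of a binary quadratic reproduces these linear relations, while the auxiliary systems for $u_1,u_2,u_3$ and then for $k_1,k_2,k_3$ are precisely the consistency conditions under which the over-determined $4\times3$ system for $(\xi_1,\xi_2,\xi_3)$ is solvable and each $\ell_i$ is simultaneously made bitangent. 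Smoothness of $Q$ is then checked directly from its equation. Step (ii) is a finite projective-geometry verification of azygeticity, which can be done with the explicit contact points found above ($\{x=0,\ y\xi_2=z\xi_3\}$ for $x=0$, and so on) or recognized as a known feature of the square-root normal form.

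The main obstacle is twofold. First, the computation in (i) that matches the auxiliary systems for $u_i$ and $k_i$ with the joint solvability-and-bitangency conditions for $\ell_1,\ell_2,\ell_3$: this is the technical core and consists of lengthy but elementary manipulations of symmetric expressions in the $\alpha_{ij}$, which I would not grind through here. Second, a clean justification of the reconstruction step (iii), ensuring that the explicit formulas recover $\mathcal{D}$ itself and not merely some other smooth quartic sharing those seven lines among its twenty-eight bitangents; this is the content of Riemann's theory of the Aronhold system --- equivalently, of the simply transitive action of $\mathrm{Sp}_6(\F_2)$ on the $288$ Aronhold sets together with the classical formulae for the remaining $21$ bitangents --- and I would either reproduce that argument or cite \cite{Riemann,Weber,Ritzenthaler}.
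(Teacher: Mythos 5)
The paper gives no proof of this statement at all: it is quoted directly from \cite[Proposition 3]{Ritzenthaler} (going back to Riemann and Weber), so there is no in-paper argument to compare yours against, and your outline is in fact the standard classical derivation behind that citation --- normalize the first four Aronhold lines to $x,y,z,x+y+z$, write the quartic in the form $\sqrt{x\xi_1}+\sqrt{y\xi_2}+\sqrt{z\xi_3}=0$, and read off the linear systems for the $\xi_j$ and $k_i$ from the requirement that $\ell_1,\ell_2,\ell_3$ also be bitangents. The computations you actually carry out (bitangency of $x=0$, $y=0$, $z=0$ and of $x+y+z=0$ via the first row $\xi_1+\xi_2+\xi_3=-(x+y+z)$) are correct, but the two steps you defer --- the identification of the remaining three rows with the perfect-square condition on $Q|_{\ell_i}$, and the injectivity of the map from Aronhold-marked smooth quartics to $7$-tuples of lines --- are precisely the substantive content of the theorem, so your write-up ends up, like the paper, resting on the cited sources for everything that is nontrivial.
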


\subsection{Image point}\label{image_computation}
For the moment $\psi$ is not completely determined, it is determined by the kernel $V$ up to an automorphism of the polarized abelian variety $(\mathcal{A},\mathcal{O}(Y))$.
Assume that the automorphism group is $\{\pm1\}$. Then we can fix a point $P\in J_\mathcal{C}$ and specify a point
$R\in\{R',R''\}=\{\psi(P),-\psi(P)\}$ to distinguish $\psi$ from $-\psi$, for example, $\psi(P)=R$. In this subsection, we discuss how to
compute $R',R''$.

The idea of the computation is this: we first compute the Schr\"odinger coordinates of $\kappa(\psi(P))$ and $\kappa(0)$ (by abuse of notation we will denote by $\kappa$ the Kummer maps $\mathcal{A}\to \mathbb{P}^7$ and $J_\mathcal{D}\to \mathbb{P}^7$).
Then we compute the linear transformation of $\mathbb{P}^7$ which allows us to identify $\kappa(\mathcal{A})$ with $\kappa(J_\mathcal{D})$.
 Next, we construct eight "good" functions on $\mathcal{D}^3$ which can give an embedding of the Kummer variety of $J_\mathcal{D}$ in $\mathbb{P}^7$,
and write each as a linear combination of the normalised Weil functions. Then we reduce to computing a point in $\mathcal{D}^3$ at which
the eight "good" functions have given values (up to a constant), and we obtain this point by computing a Gr\"obner basis.

\subsubsection{Computation of the Schr\"odinger coordinates}
Using the notations of subsection \ref{subsec: equation_of_D}, we have functions $\xi_{\bar{S}}=T_{\bar{S}_\delta}^*\beta_{\bar{S}}g_{\bar{S}}\in H^0(\mathcal{A},\mathcal{O}(2T_{\bar{S}_\delta}^*Y))$
which can be thought of as
\begin{equation*}
  (\frac{\theta\begin{bsmallmatrix}{}^t(\frac{i_1}{2}&\frac{i_2}{2}&\frac{i_3}{2})\\ {}^t(\frac{i_4}{2}&\frac{i_5}{2}&\frac{i_6}{2})\end{bsmallmatrix}(z,\Omega')}{\theta\begin{bsmallmatrix}{}^t(0&0&0)\\ {}^t(0&0&0)\end{bsmallmatrix}(z,\Omega')})^2,
\end{equation*}
where $i_1,i_2,\ldots,i_6\in\{0,1\}$ satisfy $S=i_1 S_1+\cdots+i_6 S_6$. So we can compute the image of $\psi(P)$ under $\kappa:\mathcal{A}\to \mathbb{P}^7$
by using Proposition \ref{to_Schroedinger}.

Next we study the computation for $\kappa:J_\mathcal{D}\to \mathbb{P}^7$.

It is easy to obtain a symplectic basis of $J_\mathcal{D}[2]$ from the Aronhold system \eqref{Aronhold_system} for $\mathcal{D}$.
Let $L_1,\ldots,L_7$ be the odd theta characteristic corresponding to the bitangents in \eqref{Aronhold_system}. Then there exist an even theta characteristic $\delta_\mathcal{D}$ and a symplectic basis $E_1,\ldots,E_6$ such that
\begin{equation}\label{theta_character_symplectic_basis}
(L_1-\delta_\mathcal{D},\ldots, L_7-\delta_\mathcal{D})=(E_1,\ldots,E_6)\begin{bmatrix}
1&0&0&1&1&1&0\\
1&0&1&0&0&1&1\\
1&1&1&1&0&0&0\\
1&0&0&1&1&0&1\\
1&1&0&0&0&1&1\\
1&1&1&0&1&0&0
\end{bmatrix}.
\end{equation}
Here the column vectors of the $(0,1)$-matrix form an Aronhold system in terms of reduced characteristics (see \cite[Page 10]{Weber-formula}).
So we can express $E_1,\ldots,E_6$ and $\delta_D$ in terms of $L_1,\ldots, L_7$ (the reason for using this basis will appear below).

Now let $d''$ be the bilinear pairing on $J_\mathcal{D}[2]$ which takes $-1$ on the pairs $(E_i,E_j)$ exactly for $(i,j)=(1,4),(2,5),(3,6)$.
Take $\mathcal{W}'=\{[(P_1)+(P_2)]-\delta_D: P_1,P_2\in \mathcal{D}\}$. Then we can compute a normal Weil set $\{\zeta_P\}_{P\in J_\mathcal{D}[2]}$ with respect to $d''$ such that $\text{div}(\zeta_P)=2T_P^*\mathcal{W}'-2\mathcal{W}'$. These $\zeta_P$ can be thought of as
\begin{equation*}
  (\frac{\theta\begin{bsmallmatrix}{}^t(\frac{i_1}{2}&\frac{i_2}{2}&\frac{i_3}{2})\\ {}^t(\frac{i_4}{2}&\frac{i_5}{2}&\frac{i_6}{2})\end{bsmallmatrix}(z,\Omega'')}{\theta\begin{bsmallmatrix}{}^t(0&0&0)\\ {}^t(0&0&0)\end{bsmallmatrix}(z,\Omega'')})^2
\end{equation*}
with $i_1,i_2,\ldots,i_6\in\{0,1\}$ satisfying $P=i_1 E_1+\cdots+i_6 E_6$.

\subsubsection{Computation of the linear transformation }
By abuse of language, we will say that the image of a two-torsion point in the Kummer variety "is a two-torsion point".

First, we compute the Schr\"odinger coordinates of the identity elements of $\mathcal{A}$ and $J_\mathcal{D}$ respectively. From this, as mentioned in section \ref{kummer_variety_intro}, it is easy to obtain
the Schr\"odinger coordinates of all two-torsion points. Then we look for the linear change of coordinates which sends the two-torsion points in $\kappa(\mathcal{A})$ to the two-torsion points in $\kappa(J_\mathcal{D})$ and
preserves the group structure and the Weil pairing (this is because the principally polarized abelian varieties $(A,\mathcal{O}(Y))$ and $(J_\mathcal{D},\mathcal{O}(\mathcal{W}'))$ are isomorphic). Using four nested for loops, we obtain $8+15\times 7=113$ linear equations which are enough for computation. In our experiments, it took around half an hour to examine all the possibilities, but a solution was found in a few minutes. This is similar to the cost in \cite{EneaMilio}.

The process of computing the linear change of coordinates is essentially to determine theta structure.
Thus this method can be improved after observing that $\bar{S}_i$ can be identified with $E_i$.
In fact, two quartics with the same set of bitangents coincide, and the bases $\bar{S}_1,\ldots, \bar{S}_6$ and $E_1,\ldots,E_6$ are determined by $L_1,\ldots,L_7$
and the $(0,1)$-matrix in (\ref{theta_character_symplectic_basis}).
Now, let $(\bar{S}_1,\ldots,\bar{S}_6)=(E_1,\ldots,E_6)$. Then as we explained in subsection \ref{sec:normalization}, the normalised Weil functions $\zeta_P$ and $\xi_P$ differ by a character $\chi$ of $J_\mathcal{D}[2]$, and this character $\chi$ can be determined by comparing the values of $\zeta_P(0)$ and $\xi_P(0)$ for $P=E_1,\ldots, E_6$.

\subsubsection{Computation of a lift}
This part is based on a discussion with J.-M. Couveinges.
Now let $K_\mathcal{D}=2(\infty_1)+2(\infty_2)$ be the divisor cut by the bitangent $z=0$. We consider the morphism
\begin{equation*}
  j: \mathcal{D}^3\to J_\mathcal{D}, (P_1,P_2,P_3)\mapsto [(P_1)+(P_2)+(P_3)-(\infty_1)-\delta_D].
\end{equation*}
The pullback of $\mathcal{W}'=\{[(P_1)+(P_2)]-\delta_D: P_1,P_2\in \mathcal{D}\}$ is
\[D_0+\sum_{i=1}^3pr_i^*((\infty_1))\]
with $D_0=\{(P_1,P_2,P_3)\in \mathcal{D}^3|h^0(\mathcal{D},\mathcal{O}((P_1)+(P_2)+(P_3)))=2 \}$.
Let $x_1, y_1$, $x_2, y_2$, $x_3, y_3$ be the affine coordinate functions on the factors of $\mathcal{D}^3$. Then
function $\omega=\det\begin{bsmallmatrix}
      x_1 & y_1 & 1 \\
      x_2 & y_2 & 1 \\
      x_3 & y_3 & 1 \\
    \end{bsmallmatrix}
$ has divisor
\[\text{div}(\omega)=D_0+\Delta-\sum_{i=1}^3pr_i^*(K_\mathcal{D}),\]
where $\Delta$ is the full diagonal. Note that if $f\in H^0(J_\mathcal{D},\mathcal{O}(2\mathcal{W}'))$, then
\[\text{div}(\omega^2j^*f)+\sum_{i=1}^3pr_i^*(2(\infty_1)+2K_\mathcal{D})\geq 2\Delta.\]
So if we find a basis $h_1,\ldots,h_8$ of the subspace of
\begin{equation*}
  H^0(\mathcal{D}^3,\mathcal{O}(\sum_{i=1}^3pr_i^*(2(\infty_1)+2K_\mathcal{D})))=\otimes_{i=1}^3pr_i^*H^0(\mathcal{D},\mathcal{O}(2(\infty_1)+2K_\mathcal{D}))
\end{equation*}
corresponding to functions which are invariant under permutations of factors of $\mathcal{D}^{3}$ and vanish along $\Delta$, then
we can represent $j^*f$ as a linear combination of $h_1/\omega^2,\ldots, h_8/\omega^2$.

The space $H^0(\mathcal{D},\mathcal{O}(2(\infty_1)+2K_\mathcal{D}))$ has a basis $\mathfrak{B}=\{1,x,x^2,y,y^2,xy,u,v\}$, where $u,v$ are polynomials in $x,y$ of degree $3$.
So $h_1,\ldots, h_8$ can be expressed linearly in terms of the functions of form
\begin{equation}\label{symmetric}
  \sum_{\sigma\in \text{Sym}(\{1,2,3\})}pr_{\sigma(1)}^*t_1 \cdot pr_{\sigma(2)}^*t_2\cdot  pr_{\sigma(3)}^*t_3 \,\,(t_1,t_2,t_3\in\mathfrak{B}).
\end{equation}
The number of functions in \eqref{symmetric} is $8\times 7\times 6+8\times 7+8=120$. We pick $I$ random affine points in $\Delta$, compute the values of these functions at each point to get
an $I\times 120$ matrix. Then we compute the row echelon form of this matrix. If the rank of this matrix is $112$, then we deduce $8$ linearly independent functions $h_1,\ldots,h_8$.

Next, we have to express each of the $h_i/\omega^2$ as a combination of the $j^*\zeta_P$. Using proposition \ref{schroedinger_to}, we can easily compute a basis of the $\zeta_P$ from the Schr\"odinger coordinates of the identity element. So in general after evaluating these $16$ functions at $9$ points, we can find the expressions.

Let $(R_1,R_2,R_3)\in \mathcal{D}^3$ be a point such that $\kappa(j(R_1,R_2,R_3))=\tau(\kappa(\psi(P)))$, where $\tau$ is the linear change of the Schr\"odinger
coordinates. So now we know the projective coordinates $[\lambda_1:\cdots:\lambda_8]$ for the point $[h_1(R_1,R_2,R_3):\cdots:h_8(R_1,R_2,R_3)]$.
Assume that $R_i=[x_i:y_i:1]$ for $i=1,2,3$. Then we can compute the Gr\"obner basis of the ideal in $k[x_1,y_1,x_2,y_2,x_3,y_3,\lambda]$ generated by the following $11$ polynomials:
\begin{gather*}
 h(R_i) (i=1,2,3),\,  \lambda h_{i_0}(R_1,R_2,R_3)-1, \\
  h_1(R_1,R_2,R_3)\lambda_i-h_i(R_1,R_2,R_3)\lambda_1(i=2,3,\ldots,8),
\end{gather*}
where $h$ is the affine equation of $\mathcal{D}$, and $h_{i_0}$ is assumed to be non-zero at $(R_1,R_2,R_3)$. In practice, the computation is easy,
and we find a solution.

\begin{remark}
For translating the discrete logarithm problem from $J_\mathcal{C}(\F_q)$ to $J_\mathcal{D}(\F_q)$ the results in this subsection are sufficient. In fact, for $P,Q\in J_\mathcal{C}(\F_q)$ with $Q=mP$, we can compute $R,R',R''\in J_\mathcal{D}(\F_q)$ such that $\psi(P)=R,\psi(Q)\in\{R',R''\}$. Then we compute the discrete logarithm $m'$ of $R'$ with respect to $R$. If $m'P=Q$, then $m=m'$, otherwise $m=-m'$. Here of course we assume the abelian variety $J_\mathcal{C}/V$ is isomorphic to $J_\mathcal{D}$ over $\F_q$ and
the intersection of the subgroup $\langle P\rangle$ with the kernel $V$ is trivial.
\end{remark}
\subsection{Equation of isogeny}
Let $F:\mathcal{C}\to J_\mathcal{D}$ be the morphism given by $F((x,y))=\psi([((x,y))-(\infty)])$. We first give an algebraic
description of $F$ which is slightly different from that of \cite{Couveignes-Ezome,EneaMilio}. Then we show how to compute it.

For $P=(x_0,y_0)\in \mathcal{C}$, we have points $R_1,R_2,R_3\in \mathcal{D}$ such that
\[F(P)=[(R_1)+(R_2)+(R_3)-(\infty_1)-\delta_\mathcal{D}].\]
The divisor $(R_1)+(R_2)+(R_3)$ is unique in general. To describe it, we use the following functions:
\begin{eqnarray*}
   \mathbf{p}_1&=& x(R_1)+x(R_2)+x(R_3),\\
   \mathbf{p}_2&=& x(R_1)x(R_2)+x(R_1)x(R_3) +x(R_2)x(R_3),\\
   \mathbf{p}_3&=& x(R_1)x(R_2)x(R_3),\\
   \mathbf{p}_4&=& y(R_1)+y(R_2)+y(R_3),\\
   \mathbf{p}_5&=& y(R_1)y(R_2)+y(R_1)y(R_3) +y(R_2)y(R_3),\\
   \mathbf{p}_6&=& y(R_1)y(R_2)y(R_3).\\
\end{eqnarray*}
So there are rational functions (in one variable) $\mathbf{A}_i,\mathbf{B}_i$ such that $\mathbf{p}_i((R_1)+(R_2)+(R_3))=\mathbf{A}_i(x_0)+\mathbf{B}_i(x_0)y_0$ for $i=1,\ldots,6$.
Let $\bar{P}=(x_0,-y_0)$. Then
\[F(\bar{P})=-F(P)=[(\bar{R}_1)+(\bar{R}_2)+(\bar{R}_3)-(\infty_1)-\delta_\mathcal{D}],\]
and $\mathbf{p}_i((\bar{R}_1)+(\bar{R}_2)+(\bar{R}_3))=\mathbf{A}_i(x_0)-\mathbf{B}_i(x_0)y_0$ for $i=1,\ldots,6$.

The map $F$ induces a map $F^*: H^0(J_\mathcal{D},\Omega_{J_\mathcal{D}}^1)\to H^0(\mathcal{C},\Omega_{\mathcal{C}}^1)$
of vector spaces of regular differentials. It is well known that $H^0(J_\mathcal{D},\Omega_{J_\mathcal{D}}^1)$ is isomorphic to the invariant subspace of $\oplus_{i=1}^3pr_i^*H^0(\mathcal{D},\Omega_{\mathcal{D}}^1)$ by permutations of
the three factors. Let $h$ be an affine equation of $\mathcal{D}$. Then $\omega_1=\frac{dx_1}{h_{y}(x_1,y_1)}+\frac{dx_2}{h_{y}(x_2,y_2)}+\frac{dx_3}{h_{y}(x_3,y_3)}$,
$ \omega_2=\frac{x_1dx_1}{h_{y}(x_1,y_1)}+\frac{x_2dx_2}{h_{y}(x_2,y_2)}+\frac{x_3dx_3}{h_{y}(x_3,y_3)}$,
$\omega_3=\frac{y_1dx_1}{h_{y}(x_1,y_1)}+\frac{y_2dx_2}{h_{y}(x_2,y_2)}+\frac{y_3dx_3}{h_{y}(x_3,y_3)}$
is a basis of this space, where $h_y(x,y)=\partial h(x,y)/\partial y$. We can represent $F^*\omega_1,F^*\omega_2,F^*\omega_3$ with respect to the basis $dx/y,xdx/y,x^2dx/y$ of $H^0(\mathcal{C},\Omega_{\mathcal{C}}^1)$:
\[F^*\omega_i=(m_{i1}+m_{i2}x+m_{i3}x^2)dx/y,\quad i=1,2,3.\]

Fix a point $P=(x_0,y_0)\in \mathcal{C}$ with $y_0\ne 0$. Let $t=x-x_0$ be the local parameter at $P$. Then we have a formal point $P(t)=(U(t), V(t))$ with $U(t)=x_0+t$ and $V(0)=y_0$.
Let $R_i(t)=(X_i(t),Y_i(t))$, $\bar{R}_i(t)=(\bar{X}_i(t),\bar{Y}_i(t))$, $i=1,2,3$ be formal points on $\mathcal{D}$ such that
\begin{eqnarray*}
 F(P(t))&=&[(R_1(t))+(R_2(t))+(R_3(t))-(\infty_1)-\delta_\mathcal{D}],  \\
 F(\bar{P}(t))&=&[(\bar{R}_1(t))+(\bar{R}_2(t))+(\bar{R}_3(t))-(\infty_1)-\delta_\mathcal{D}]
\end{eqnarray*}
with $\bar{P}(t)=(U(t), -V(t))$.
Then we have two systems of differential equations
\begin{equation}\label{differential_eqn1}
  \left\{ \begin{aligned}
   \frac{\dot{X}_1(t)}{h_{y}(X_1,Y_1)}+\frac{\dot{X}_2(t)}{h_{y}(X_2,Y_2)}+\frac{\dot{X}_3(t)}{h_{y}(X_3,Y_3)}&=\frac{(m_{11}+m_{12}U+m_{i3}U^2)\dot{U}(t)}{V},\\
   \frac{X_1\dot{X}_1(t)}{h_{y}(X_1,Y_1)}+\frac{X_2\dot{X}_2(t)}{h_{y}(X_2,Y_2)}+\frac{X_3\dot{X}_3(t)}{h_{y}(X_3,Y_3)}&=\frac{(m_{21}+m_{22}U+m_{23}U^2)\dot{U}(t)}{V},\\
   \frac{Y_1\dot{X}_1(t)}{h_{y}(X_1,Y_1)}+\frac{Y_2\dot{X}_2(t)}{h_{y}(X_2,Y_2)}+\frac{Y_3\dot{X}_3(t)}{h_{y}(X_3,Y_3)}&=\frac{(m_{31}+m_{32}U+m_{33}U^2)\dot{U}(t)}{V},\\
   h(X_1(t),Y_1(t))&=0,\\
   h(X_2(t),Y_2(t))&=0,\\
   h(X_3(t),Y_3(t))&=0.
   \end{aligned}\right.
\end{equation}
and
\begin{equation}\label{differential_eqn2}
  \left\{ \begin{aligned}
   \frac{\dot{\bar{X}}_1(t)}{h_{y}(\bar{X}_1,\bar{Y}_1)}+\frac{\dot{\bar{X}}_2(t)}{h_{y}(\bar{X}_2,\bar{Y}_2)}+\frac{\dot{\bar{X}}_3(t)}{h_{y}(\bar{X}_3,\bar{Y}_3)}&=\frac{(\bar{m}_{11}+\bar{m}_{12}U+\bar{m}_{i3}U^2)\dot{U}(t)}{V},\\
   \frac{\bar{X}_1\dot{\bar{X}}_1(t)}{h_{y}(\bar{X}_1,\bar{Y}_1)}+\frac{\bar{X}_2\dot{\bar{X}}_2(t)}{h_{y}(\bar{X}_2,\bar{Y}_2)}+\frac{\bar{X}_3\dot{\bar{X}}_3(t)}{h_{y}(\bar{X}_3,\bar{Y}_3)}&=\frac{(m_{21}+m_{22}U+m_{23}U^2)\dot{U}(t)}{V},\\
   \frac{\bar{Y}_1\dot{\bar{X}}_1(t)}{h_{y}(\bar{X}_1,\bar{Y}_1)}+\frac{\bar{Y}_2\dot{\bar{X}}_2(t)}{h_{y}(\bar{X}_2,\bar{Y}_2)}+\frac{\bar{Y}_3\dot{\bar{X}}_3(t)}{h_{y}(\bar{X}_3,\bar{Y}_3)}&=\frac{(m_{31}+m_{32}U+m_{33}U^2)\dot{U}(t)}{V},\\
   h(\bar{X}_1(t),\bar{Y}_1(t))&=0,\\
   h(\bar{X}_2(t),\bar{Y}_2(t))&=0,\\
   h(\bar{X}_3(t),\bar{Y}_3(t))&=0.
   \end{aligned}\right.
\end{equation}

Now the rational functions $\mathbf{A}_1,\mathbf{B}_1,\ldots,\mathbf{A}_6,\mathbf{B}_6$ can be computed as follows.
\begin{enumerate}
  \item Compute $R_1(t),R_2(t),R_3(t)$ at precision $4$ with the method described in subsection \ref{image_computation}. Then comparing the coefficients of $t^0,t^1,t^2$ in \eqref{differential_eqn1}, we get
  $9$ linear equations with the $9$ unknown $m_{11},m_{12},\ldots, m_{33}$. We can solve this system and get the values $m_{ij}$ for $i,j\in\{1,2,3\}$.
  \item Increase the accuracy of $X_1(t),Y_1(t),X_2(t),Y_2(t),X_3(t), Y_3(t)$. Their coefficients can be computed one by one using (\ref{differential_eqn1}). See \cite{Couveignes-Ezome,EneaMilio} for more details.
  \item Repeat the above process for $\bar{X}_1(t), \ldots, \bar{Y}_3(t)$ by using \eqref{differential_eqn2}.
  \item Using continued fraction to recover $\mathbf{A}_i,\mathbf{B}_i$ from
  \begin{eqnarray*}
  \mathbf{A}_i(U(t))&=&\frac{\mathbf{p}_i(R_1(t),R_2(t),R_3(t))+\mathbf{p}_i(\bar{R}_1(t),\bar{R}_2(t),\bar{R}_3(t))}{2} \\
   \mathbf{B}_i(U(t))&=&\frac{\mathbf{p}_i(R_1(t),R_2(t),R_3(t))-\mathbf{p}_i(\bar{R}_1(t),\bar{R}_2(t),\bar{R}_3(t))}{2V(t)}
  \end{eqnarray*}
 for $i=1,2,\ldots,6$.
\end{enumerate}
\begin{remark}
If we like, we can write $F(P)$ as $[(R_1')+(R_2')+(R_3')-3(O')]$ with $O'$ a fixed rational point on $\mathcal{D}$.
Once the expressions for $\mathbf{p}_1,\ldots,\mathbf{p}_6$ is given, we can use interpolation to find the rational fractions for the six functions
$x(R_1')+x(R_2')+x(R_3')$, $x(R_1')x(R_2')+x(R_1')x(R_3')+x(R_2')x(R_3')$, $x(R_1')x(R_2')x(R_3')$, $y(R_1')+y(R_2')+y(R_3')$,$y(R_1')y(R_2')+y(R_1')y(R_3')+y(R_2')y(R_3')$,$y(R_1')y(R_2')y(R_3')$.

\end{remark}

\section{Evaluation of Weil functions}
The main aspect of computation is to evaluate Weil functions, which reduces to evaluate functions like \eqref{WeilFunction}
\begin{equation*}
  g_P=(\frac{\det(f_i^{D_P}(z_j))}{\det(f_i^{D_0}(z_j))})^N\prod_{i=1}^{g}h^{E_P}(z_i).
\end{equation*}
To compute the Weil pairing for the $\ell$-torsion points or a normalization of a Weil set (see \eqref{WeilPairing},\eqref{Npower},\eqref{normalization}), we can always
choose pairwise distinct points $z_1,\ldots,z_g$ such that the $f_i^{D_P}$, $f_i^{D_0}$, $h^{E_P}$ are regular at these points and
that the denominator is nonzero. However, we also need to evaluate functions at some given point $(z_1,\ldots,z_g)$ which may not satisfy these
conditions. This difficulty is resolved by Couveignes and Ezome in \cite{Couveignes-Ezome}. Assume for example that $z_1=z_2=\cdots=z_g$.
We fix a local parameter $u$ at $z_1$ and $g$ pairwise distinct scalars $c_1,\ldots,c_g$ in $\mathbb{F}_q$ (or a small degree extension of it). Then we consider the field $k((t))$ of formal series in $t$ with coefficients
in $k$, and take $g$ points $z_1(t),\ldots,z_g(t)$ in $\mathcal{C}(k((t)))$ associated with the values $c_1t,\ldots,c_gt$ of the local parameter $u$.
We do the evaluation with $(z_1,\ldots,z_g)$ replaced by $(z_1(t),\ldots,z_g(t))$ and set $t=0$ in the result. It is pointed out in \cite{Couveignes-Ezome} that the necessary $t$-adic accuracy is $g(g-1)/2$.
%

\section{An example}
We now give an example of computation which was done with the computational algebra system Magma \cite{magma}.

Let $\mathcal{C}/\mathbb{F}_{257}$ be the hyperelliptic curve given by \[y^2= x^7 + 13x^5 + 6x^4 + 138x^3 + 125x^2 + 104x + 167.\]
Let $\mathbb{F}_{257^6}=\mathbb{F}_{257}[b]$ with $b^6 + 3b^4 + 62b^3 + 18b^2 + 138b + 3=0$.
Then the following three points (with Mumford representation)
\begin{align*}
  T_1=&\quad  ( x^3+15x^2+224x+57,168x^2 + 119x + 53) \\
  T_2=& \quad ( x^3 + b^{41470257160332}x^2 + b^{ 226656780125958}x + b^{186444594999936},\\
   &\quad  b^{111992175485190}x^2 + b^{214167145454262}x + b^{155385077009526})\\
  T_3=& \quad( x^3+b^{79934907834054}x^2+ b^{95549446438992}x+ b^{131411603284416},\\
 & \quad b^{60233090689044}x^2+b^{87544041188058}x +b^{104379562339416} )
\end{align*}
generate an $\mathbb{F}_{257}$-rational maximal isotropic subgroup $V$ of $J_\mathcal{C}[3]$.
Applying Riemann's construction, we get the following equation
\begin{align*}
&x^4+b^{287508602266704}x^3y+
    b^{257542339384785}x^3z+
    b^{256360092006564}x^2y^2\\
 &   +
    b^{64696415568330}x^2yz+
    b^{199437716393916}x^2z^2+
    b^{50793011898993}xy^3\\
 &+
    b^{10112359493418}xy^2z+
    b^{253764382818219}xyz^2+
    b^{121880543368038}xz^3\\
 &+
    b^{102842434295874}y^4+
    b^{238870683374505}y^3z+
    b^{100316525690745}y^2z^2\\
    &+
    b^{195856073967645}yz^3+
    b^{16813215482154}z^4=0
\end{align*}
for $\mathcal{D}$. We compute its normalised Dixmier-Ohno Invariants and reconstruct
a new curve over $\mathbb{F}_{257}$ with same invariants \cite{Ritzenthaler_quartics}. This new curve $\mathcal{D}'$ is defined by
\begin{align*}
& x^4 + 89x^3y + 244x^2y^2 + 126xy^3 + 113y^4 + 131x^3z + 3x^2yz + 255xy^2z \\
& + 65y^3z + 172x^2z^2 +139xyz^2 + 21y^2z^2 + 201xz^3 + 228yz^3 + 70z^4.
\end{align*}
Under the isomorphism $\mathcal{D}\to \mathcal{D}'$,
the Aronhold system \eqref{Aronhold_system} of bitangents for $\mathcal{D}$ corresponds to
\begin{align*}
&x+b^{215522388288753}y+ b^{47695611018783}z,x+b^{216848599369857}y+ b^{96839584139298}z,\\
&x+b^{20246531669091}y+ b^{254174461244613}z,x+b^{38739323684880}y+ b^{58104622990869}z,\\
&x+b^{16896103674723}y+ b^{203918041329093}z,x+b^{179653613588631}y+ b^{27296390573907}z,\\
&x+b^{139396127218803}y+ b^{219732235963968}z.
\end{align*}

Let $F$ be the composition of the embedding $C\to J_\mathcal{C}, P\mapsto [(P)-(\infty)]$ with the isogeny $J_\mathcal{C}\to J_\mathcal{C}/V=J_{\mathcal{D}'}$.
Then $F(P)$ can be represented by the class of divisor $D-3((137:1:0))$ with $D$ an effective divisor of degree $3$.
Let $P=(u,v)$. Then $D$ is the divisor on $\mathcal{D}'$ cut by the two polynomials
\[x^3-\mathbf{C}_1(u,v)x^2z+\mathbf{C}_2(u,v)xz^2-\mathbf{C}_3(u,v)z^3\]
and
\[y^3-\mathbf{C}_4(u,v)y^2z+\mathbf{C}_5(u,v)yz^2-\mathbf{C}_6(u,v)z^3,\]
where $\mathbf{C}_1,\ldots,\mathbf{C}_6$ are rational functions on $\mathcal{C}$ given by

\begin{align*}
  \mathbf{C}_1(x,y)&=[(63+142x+147x^{2}+10x^{3}+202x^{4}+122x^{5}+127x^{6}+64x^{7}+245x^{8}\\
  &+169x^{9}+249x^{10}+187x^{11}+209x^{12}+139x^{13}+91x^{14}+215x^{15}+206x^{16}\\
  &+77x^{17}+62x^{18}+85x^{19}+250x^{20}+239x^{21}+104x^{22}+57x^{23}+58x^{24}\\
  &+132x^{25}+200x^{26}+25x^{27}+73x^{28}+29x^{30}+196x^{31}+106x^{32}+14x^{33}\\
  &+83x^{34})+y(103x+207x+60x^{2}+219x^{3}+90x^{4}+15x^{5}+93x^{6}+200x^{7}\\
  &+211x^{8}+35x^{9}+142x^{10}+185x^{11}+89x^{12}+54x^{13}+95x^{14}+197x^{15}\\
  &+202x^{16}+50x^{17}+131x^{18}+198x^{19}+132x^{20}+16x^{21}+13x^{22}+76x^{23}\\
  &+110x^{24}+240x^{25}+129x^{26}+12x^{27}+39x^{28}+246x^{29}+54x^{30})]/\mathbf{p}(x),
\end{align*}
\begin{align*}
  \mathbf{C}_2(x,y)&=[(181+22x+86x^{2}+48x^{3}+200x^{4}+188x^{5}+5x^{6}+153x^{7}+223x^{8}\\
  &+228x^{9}+37x^{10}+41x^{11}+16x^{12}+217x^{13}+80x^{14}+208x^{15}+218x^{16}\\
  &+162x^{17}+167x^{18}+41x^{19}+215x^{20}+91x^{21}+117x^{22}+18x^{23}+194x^{24}\\
  &+237x^{25}+68x^{26}+49x^{27}+164x^{28}+184x^{29}+98x^{30}+86x^{31}+211x^{32}\\
  &+201x^{33}+188x^{34})+y(168+179x+79x^{2}+22x^{3}+51x^{4}+36x^{5}+123x^{6}\\
  &+43x^{7}+134x^{8}+189x^{9}+126x^{10}+188x^{11}+90x^{12}+55x^{13}+224x^{14}\\
  &+103x^{15}+32x^{16}+99x^{17}+118x^{18}+79x^{19}+208x^{20}+116x^{21}+118x^{22}\\
  &+205x^{23}+236x^{24}+8x^{25}+14x^{26}+18x^{27}+135x^{28}+157x^{29}+139x^{30}\\
  &+51x^{31})]/\mathbf{p}(x),
\end{align*}
\begin{align*}
 \mathbf{C}_3(x,y)&=[(34+42x+33x^{2}+147x^{3}+62x^{4}+245x^{5}+74x^{6}+235x^{7}+120x^{8}\\
 &+116x^{9}+199x^{10}+233x^{11}+64x^{12}+169x^{13}+204x^{14}+242x^{15}+207x^{16}\\
 &+102x^{17}+120x^{18}+256x^{19}+39x^{20}+203x^{21}+19x^{22}+86x^{23}+128x^{24}\\
 &+53x^{25}+192x^{26}+5x^{27}+166x^{28}+39x^{29}+161x^{30}+230x^{31}+178x^{32}\\
 &+122x^{33}+174x^{34}+190x^{35})+y(79+233x+180x^{2}+115x^{4}+199x^{5}\\
 &+235x^{6}+77x^{7}+198x^{8}+148x^{9}+30x^{10}+96x^{11}+166x^{12}+236x^{13}\\
 &+95x^{15}+96x^{16}+37x^{17}+247x^{18}+147x^{19}+144x^{20}+93x^{21}+246x^{22}\\
 &+80x^{23}+178x^{24}+113x^{25}+213x^{26}+240x^{27}+93x^{28}+234x^{29}+211x^{30}\\
 &+87x^{31}
)]/\mathbf{p}(x),
\end{align*}
\begin{align*}
\mathbf{C}_4(x,y)&=[(116+173x+41x^{2}+5x^{3}+85x^{4}+22x^{5}+203x^{6}+239x^{7}+126x^{8}\\
&+25x^{9}+102x^{10}+110x^{11}+180x^{12}+252x^{13}+104x^{14}+180x^{15}+197x^{16}\\
&+15x^{17}+211x^{18}+99x^{19}+171x^{20}+45x^{21}+54x^{22}+48x^{23}+31x^{24}\\
&+214x^{25}+32x^{26}+115x^{27}+240x^{28}+64x^{29}+186x^{30}+124x^{31}+59x^{32}\\
&+230x^{33}+22x^{34})+y(74+109x+212x^{2}+93x^{3}+208x^{4}+73x^{5}+39x^{6}\\
&+69x^{7}+122x^{8}+158x^{9}+220x^{10}+90x^{11}+125x^{12}+77x^{13}+51x^{14}\\
&+143x^{15}+189x^{16}+83x^{17}+85x^{18}+24x^{19}+233x^{20}+22x^{21}+21x^{22}\\
&+45x^{23}+104x^{24}+229x^{25}+103x^{26}+41x^{27}+76x^{28}+168x^{29}+23x^{30})]/\\
&\mathbf{p}(x),
\end{align*}
\begin{align*}
\mathbf{C}_5(x,y)&=[(226+152x+5x^{2}+62x^{3}+143x^{4}+191x^{5}+255x^{6}+4x^{7}+139x^{8}\\
&+49x^{9}+191x^{10}+239x^{11}+229x^{12}+36x^{13}+198x^{14}+250x^{15}+50x^{16}\\
&+113x^{17}+120x^{18}+2x^{19}+28x^{20}+105x^{21}+193x^{22}+200x^{23}+135x^{24}\\
&+118x^{25}+155x^{26}+76x^{27}+174x^{28}+37x^{29}+204x^{30}+57x^{31}+13x^{32}\\
&+155x^{33}+91x^{34})+y(249+176x+70x^{2}+222x^{3}+121x^{4}+102x^{5}\\
&+207x^{6}+27x^{7}+120x^{8}+240x^{9}+134x^{10}+224x^{11}+152x^{12}+81x^{13}\\
&+162x^{14}+217x^{15}+236x^{16}+103x^{17}+175x^{18}+110x^{19}+19x^{20}+118x^{21}\\
&+228x^{22}+249x^{23}+36x^{24}+183x^{25}+185x^{26}+5x^{27}+16x^{28}+200x^{29}\\
&+95x^{30}+87x^{31})]/\mathbf{p}(x),
\end{align*}
\begin{align*}
\mathbf{C}_6(x,y)&=[(161+113x+91x^{2}+180x^{3}+238x^{4}+167x^{5}+253x^{6}+126x^{7}+156x^{8}\\
&+160x^{9}+216x^{10}+120x^{11}+62x^{12}+174x^{13}+11x^{14}+140x^{15}+147x^{16}\\
&+154x^{17}+6x^{18}+123x^{19}+165x^{20}+110x^{21}+28x^{22}+88x^{23}+237x^{24}\\
&+247x^{25}+226x^{26}+234x^{27}+126x^{28}+256x^{29}+99x^{30}+172x^{31}+75x^{32}\\
&+222x^{33}+34x^{34}+222x^{35})+y(43x^{0}+256x^{1}+48x^{2}+185x^{3}+13x^{4}\\
&+13x^{5}+254x^{6}+151x^{7}+115x^{8}+74x^{9}+163x^{10}+37x^{11}+143x^{12}\\
&+94x^{13}+35x^{14}+110x^{15}+164x^{16}+208x^{17}+241x^{18}+204x^{19}+196x^{20}\\
&+46x^{21}+125x^{22}+55x^{23}+159x^{24}+213x^{25}+68x^{26}+35x^{27}+121x^{28}\\
&+57x^{29}+83x^{30}+196x^{31})]/\mathbf{p}(x)
\end{align*}
with
\begin{align*}
   \mathbf{p}(x)&=135+240x+129x^{2}+163x^{3}+91x^{4}+239x^{5}+229x^{6}+10x^{7}+128x^{8}\\
  &+5x^{9}+13x^{10}+115x^{11}+199x^{12}+154x^{13}+169x^{14}+46x^{15}+144x^{16}\\
  &+187x^{17}+250x^{18}+89x^{19}+195x^{20}+204x^{21}+117x^{22}+149x^{23}\\
  &+166x^{24}+183x^{25}+64x^{26}+66x^{27}+167x^{28}+65x^{29}+24x^{30}+13x^{31}\\
  &+138x^{32}+60x^{33}+x^{34}.
\end{align*}
For example, $P_1=(2,7)$ and $P_2=(121,5)$ are two points on $\mathcal{C}$, and the corresponding effective divisors defining $F(P_1)$ and $F(P_2)$ are cut respectively by
\[x^3 + 239x^2z + 77xz^2 + 90z^3,y^3 + 101y^2z + 61yz^2 + 132z^3\]
and
\[x^3 + 90x^2z + 59xz^2 + 107z^3,y^3 + 59y^2z + 231yz^2 + 192z^3.\]
We checked that $86241[(P_1)-(\infty)]=[(P_2)-(\infty)]$ and that $86241 F(P_1)=F(P_2)$.

\section*{Acknowledgements}

Part of this work has been done while the author worked as a postdoc with Claus Diem at the University of Leipzig. I thank him for suggesting me to make use of Jean-Marc Couveignes and Tony Ezome's approach. I thank Christophe Ritzenthaler for telling me about Milio's paper. I thank Jean-Marc Couveignes for giving me the idea about how to construct the eight good functions on the Kummer varirty. I also thank Nicholas Shepherd-Barron for answering many questions about his paper.


\bibliographystyle{amsplain}
\bibliography{mybibtex}

\end{document}